\newtheorem{theorem}{Theorem}[section]
\newtheorem{lemma}[theorem]{Lemma}
\newtheorem{remark}[theorem]{Remark}
\newtheorem*{conjecture}{Conjecture}
\DeclareMathOperator{\Aut}{Aut}
\DeclareMathOperator{\Sz}{Sz}
\DeclareMathOperator{\Sol}{Sol}
\DeclareMathOperator{\Syl}{Syl}
\DeclareMathOperator{\GL}{GL}
\DeclareMathOperator{\PGL}{PGL}
\DeclareMathOperator{\SL}{SL}
\DeclareMathOperator{\PSL}{PSL}
\DeclareMathOperator{\GU}{GU}
\DeclareMathOperator{\PSU}{PSU}
\DeclareMathOperator{\PSp}{PSp}
\DeclareMathOperator{\Po}{P}
\DeclareMathOperator{\Li}{L}
\DeclareMathOperator{\Un}{U}
\DeclareMathOperator{\GAP}{GAP}
\newcommand{\sub}{\leqslant}
\newcommand{\Z}{{\mathcal Z}}
\newcommand{\C}{{\mathcal C}}
\newcommand{\N}{{\mathcal N}}
\newcommand{\h}{{\mathcal H}}
\newcommand{\cyc}[1]{\langle #1\rangle}
\newcommand{\Fix}{{\rm Fix}}
\newcommand{\nor}{\trianglelefteq}
\newcommand{\bk}{\backslash}
\begin{document}
\begin{abstract}
Let $G$ be a finite group and $x$ be an element of $G$. Define $\Sol_G(x)$ as the set of all $y \in G$ such that $\cyc{x,y}$  is soluble. We provide an equivalent condition for the normalizer-solubilizer conjecture, namely $|\N_G(\cyc{x})| \mid |\Sol_G(x)|$, where $\N_G(\cyc{x})$ is the normalizer of $\cyc{x}$. Furthermore, we demonstrate that the conjecture holds in the special case where $\N_G(\cyc{x})$ is a Frobenius group with kernel $\C_G(x)$, the centralizer of $x$ and $|\N_G(\cyc{x}): \C_G(x)|$ is of prime order. Finally, we will classify all finite simple groups $G$  that contain an element $x$ for which $\Sol_G(x)$ is a maximal subgroup of order $pq$, where $p$ and $q$ are prime numbers.
\end{abstract}

\title[On the Normalizer-Solubilizer Conjecture]{On the Normalizer-Solubilizer Conjecture}
\author[H. Mousavi]{Hamid Mousavi}
\address{Department of Mathematics, Statistics, and Computer Sciences, University of  Tabriz, Tabriz, Iran.}
\email{hmousavi@tabrizu.ac.ir}
\subjclass[2020]{Primary: 20D05; Secondary:  20D99.}
\keywords{Finite simple group, Insoluble group, Solubilizer.}

\maketitle
\section{Introduction}
Let $G$ be a finite group. For an element $x\in G$, the {\em solubilizer} of $x$ in $G$ is define as
$$
\Sol_G (x):= \{ g \in G ~|~ \langle x, g \rangle \ \mbox{is soluble} \}.
$$

By a result of  John G. Thompson~\cite[Corollary 2]{Thompson} a finite group $G$ is soluble if and only if every two-generated subgroup of $G$ is soluble.Therefore, $G$ is soluble if and only if $\Sol_G(x)=G$ for all $x\in G$. By~\cite[Theorem 1.1]{GKPS}, $x\in R(G)$, the soluble radical of $G$ if and only if $\cyc{x,g}$ is soluble for any $g\in G$. So,  $\Sol_G(x)=G$ if and only if $x\in R(G)$.

In 2013, Hai-Reuven~\cite{Hai}, prove that, for every $x\in G$,  $|\C_G(x)|$ divides $|\Sol_G(x)|$, where $\C_G(x)$ is the centralizer of $x$ in $G$. In 2023, Mousavi and et al.~\cite{Mousavi}, propose the following conjecture:

\begin{conjecture}[Normalizer-Solubilizer Conjecture]
Let $G$ be a finite insoluble group. Then for any $x\in G$,
$|\N_G(\langle x \rangle)|\mid|\Sol_G(x)|$, where $\N_G(\cyc{ x})$ is the normalizer of $x$ in $G$.
\end{conjecture}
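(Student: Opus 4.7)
The plan is to upgrade Hai-Reuven's divisibility $|\C_G(x)| \mid |\Sol_G(x)|$ to the stronger statement $|\N_G(\cyc{x})| \mid |\Sol_G(x)|$. Since $\C_G(x) \nor \N_G(\cyc{x})$ with quotient $\N_G(\cyc{x})/\C_G(x)$ embedding into the abelian group $\Aut(\cyc{x})$, the remaining factor to account for is $m := |\N_G(\cyc{x}):\C_G(x)|$, and the task reduces to showing that $m$ divides $|\Sol_G(x)|/|\C_G(x)|$.

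As a first step I would establish an equivalent formulation. The key elementary observation is that $\Sol_G(x)$ is invariant under $\N_G(\cyc{x})$-conjugation: for $n\in\N_G(\cyc{x})$ and $g\in\Sol_G(x)$, one has $n\cyc{x,g}n^{-1}=\cyc{nxn^{-1},ngn^{-1}}=\cyc{x,ngn^{-1}}$, since $\cyc{nxn^{-1}}=\cyc{x}$. This produces a conjugation action of $\N_G(\cyc{x})$ on $\Sol_G(x)$, inducing an action of $\N_G(\cyc{x})/\C_G(x)$ on the set of $\C_G(x)$-conjugacy classes inside $\Sol_G(x)$. The conjecture should then reduce, by an orbit-counting argument, to the statement that this induced action has no short orbits, or equivalently that every class fixed by a non-trivial element of $\N_G(\cyc{x})/\C_G(x)$ contributes a multiple of a controlled index. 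Isolating this as an explicit equivalent condition is the structural goal of the first step.

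Next, I would treat the Frobenius special case stated in the abstract: suppose $\N_G(\cyc{x})=\C_G(x)\rtimes\cyc{t}$ with $\cyc{t}$ of prime order $p$ acting fixed-point-freely on $\C_G(x)$. The Frobenius hypothesis implies that $t$ does not commute with any non-trivial element of $\C_G(x)$, and one needs to transfer this freeness to the $\C_G(x)$-conjugacy classes inside $\Sol_G(x)$. The crucial step is to argue that a $t$-fixed $\C_G(x)$-class would force some $c\in\C_G(x)$ with $c^{-1}t$ centralising a representative of that class, which in turn forces $t\in\C_G(x)\cdot\C_G(g)$ and yields a commutation relation incompatible with the Frobenius structure. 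The orbit equation then delivers $p\mid|\Sol_G(x)|/|\C_G(x)|$, which combined with Hai-Reuven's divisibility gives the conjecture in this case.

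The principal obstacle will be the general case, where $\N_G(\cyc{x})/\C_G(x)$ can be an arbitrary subgroup of $\Aut(\cyc{x})$ without any Frobenius structure. Here purely elementary arguments appear insufficient, and a reduction to almost simple groups via a minimal counterexample --- followed by a case-by-case analysis across the alternating, Lie-type, and sporadic families using the classification of finite simple groups --- seems unavoidable. The paper's third contribution, classifying simple groups for which $\Sol_G(x)$ is a maximal subgroup of order $pq$, then serves as the sharpest available base case, pinning down precisely where a potential minimal counterexample could lie and providing strong supporting evidence for the full conjecture.
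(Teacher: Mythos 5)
First, a framing point: the statement you are proving is the conjecture itself, which the paper does not prove either --- it only (a) gives an equivalent reformulation (Theorem~\ref{T-3.4}), (b) settles the special case where $\N_G(\cyc{x})$ is Frobenius with kernel $\C_G(x)$ of prime index (Theorem~\ref{frob}), and (c) classifies simple groups with $\Sol_G(x)$ maximal of order $pq$. Your proposal follows essentially this same programme and, like the paper, explicitly leaves the general case open, so as a proof of the stated conjecture it is necessarily incomplete; that is not in itself a defect relative to the paper. Your first step (the $\N_G(\cyc{x})$-action on $\Sol_G(x)$, which is the paper's Lemma~\ref{L-3.1}) and the idea of extracting an equivalent counting condition are in the same spirit as the paper, though the paper's equivalence is sharper and more concrete: via Burnside counting (Lemma~\ref{L-3.2}) it shows the conjecture is equivalent to $\frac{|\C_G(x)|}{|\N_G(\cyc{x})|}\ell_{\C_x}$ being an integer for all $G$ and $x$, proved by induction on $|G|$ using the quantities $n(g,x)=|\Sol_{\C_G(g)}(x)|/|\N_{\C_G(g)}(\cyc{x})|$.

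There is, however, a genuine gap in your treatment of the Frobenius case: the fixed-point-freeness you want to transfer simply fails. Write $\N_G(\cyc{x})=\C_G(x)\rtimes\cyc{t}$ with $|t|=p$. Since $t$ normalizes $\cyc{x}$, the subgroup $\cyc{x,t}$ is metacyclic, hence soluble, so $t\in\Sol_G(x)$; and the $\C_G(x)$-class of $t$ inside $\Sol_G(x)$ is $t$-invariant, because for $c\in\C_G(x)$ one has $(t^c)^t=t^{ct}=t^{tc^t}=t^{c^t}$ with $c^t\in\C_G(x)$. So $t$-fixed $\C_G(x)$-classes always exist (more generally any element of $\C_G(t)\cap\Sol_G(x)$ produces one), and your crucial claim that a fixed class yields a ``commutation relation incompatible with the Frobenius structure'' cannot work: the relation you derive is $tc^{-1}\in\C_G(g)$ for some representative $g$, but $\C_G(g)$ need not meet $\N_G(\cyc{x})$ in the kernel, so no contradiction with the Frobenius property follows. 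Consequently the orbit equation does not deliver $p\mid|\Sol_G(x)|/|\C_G(x)|$ by your route. The paper's proof of Theorem~\ref{frob} instead confronts the fixed points directly: for $1\neq g\in\C_G(x)$ it shows $n(g,x)$ is an integer by induction on $|G|$ (the normalizer--centralizer pair inside $\C_G(g)$ is again Frobenius of prime index, or the two coincide), and for $g\in\N_G(\cyc{x})\bk\C_G(x)$ it shows $|\cyc{g}|=|\N_{\C_G(g)}(\cyc{x})|$ divides $|\Sol_{\C_G(g)}(x)|$ by the coset-covering argument $z\cyc{g}\subseteq\Sol_{\C_G(g)}(x)$; Lemma~\ref{L-3.2} with $\h=\N_x$ then yields $|\N_G(\cyc{x})|\mid|\Sol_G(x)|$. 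If you want to salvage your approach, you would need to replace ``no fixed classes'' by a quantitative control of the fixed-point contribution, which is exactly what the paper's $n(g,x)$ bookkeeping accomplishes.
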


The authors of \cite{Akbari1} confirm the validity of this conjecture for minimal simple groups and all groups of orders up to $2000000$.
The purpose of this paper is firstly to present an alternative condition that is equivalent to the normalization conjecture, and second is to prove the correctness of the normalizer-solubilizer conjecture when $\N_G(\cyc{x})$ is a Frobenius group with kernel $\C_G(x)$.  Additionally, as an application of these results, the paper classifies the finite simple groups $G$ for which, for some $x\in G$, $\Sol_G(x)$ is a maximal subgroup of order $pq$, where $p$ and $q$ are distinct primes.

All notations used are standard. We denote the normalizer of the subgroup generated by $x$ as $\N_G(\cyc{x})$, the centralizer of the element $x$ as $\C_G(x)$, soluble radical of $G$ as $R(G)$, the set of Sylow $p$-subgroups of $G$ as $\Syl_p(G)$, where $p$ is a prime number and set of all conjugates of $x$ by elements of $G$ as $x^G$. Additionally, $C_n$ represents a cyclic group of order $n$, $D_{2n}$ denotes the dihedral group of order $2n$. In this paper, all finite simple groups are discussed using standard notation.

\section{Preliminaries}
The following lemmas will aid us in proving the main results.

\begin{lemma}\label{sol}\cite{Akbari, Hai}
Let $G$ be a finite group and $N\nor G$. Then, the following statements hold for any $x\in G$.
\begin{itemize}
\item[(i)]  For every $x\in G$, $|\mathcal{C}_{G}(x)|$ divides $|\Sol_G(x)|$. So $|x|$ divides $|\Sol_G (x)|$.
\item[(ii)] If $\cyc{x}=\cyc{y}$, then $\Sol_G(x)=\Sol_G(y)$.
\item[(iii)] For any $g\in G$, $\Sol_G (x^g)=\Sol_G (x)^g$.
 \item[(iv)]  If $N$ is soluble, then $|\Sol_G (x)|$ is divisible by $|N|$. In particular, $|\Sol_G (x)|$ is divisible by $|R(G)|$. Furthermore,
 $\Sol_{\frac{G}{N}} (xN)=\frac{\Sol_G (x)}{N}=\{xN\,| x\in\Sol_G(x)\}$.
 \item[(v)] $|\Sol_G (x)|=p$ is a prime number if and only if $G$ is of order $p$.
\item[(vi)] If the elements of $\Sol_G(x)$ for some $x\in G$
commute pairwise, then $G$ is abelian.
  \end{itemize}
 \end{lemma}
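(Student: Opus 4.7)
\emph{Proof plan.} I would prove the items in the order (ii), (iii), (iv), (i), (vi), (v), handling the direct algebraic parts first and reducing the technical divisibility in~(i) to the cited work. Items~(ii) and~(iii) are immediate: $\cyc{x}=\cyc{y}$ implies $\cyc{x,g}=\cyc{y,g}$ for every $g$, and $\cyc{x^g,h}=g^{-1}\cyc{x,ghg^{-1}}g$ is soluble iff $\cyc{x,ghg^{-1}}$ is, giving $\Sol_G(x^g)=g^{-1}\Sol_G(x)g$. For~(iv), the key observation is that, for a soluble normal subgroup $N\nor G$, the subgroup $\cyc{x,g}$ is soluble iff its image $\cyc{xN,gN}=\cyc{x,g}N/N$ in $G/N$ is soluble: the forward direction uses that quotients of soluble groups are soluble, and the reverse follows because $\cyc{x,g}\cap N$ is a subgroup of the soluble group $N$, so $\cyc{x,g}$ is an extension of soluble by soluble. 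This simultaneously yields $\Sol_{G/N}(xN)=\Sol_G(x)/N$ and shows that $\Sol_G(x)$ is a union of full $N$-cosets, so $|N|\mid|\Sol_G(x)|$; specialising $N=R(G)$ handles the radical case.

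Part~(i) is the central technical step. Since $\cyc{x,c}$ is abelian for every $c\in\C_G(x)$, we have $\C_G(x)\sue\Sol_G(x)$, and the invariance of $\Sol_G(x)$ under right multiplication by $\cyc{x}$ (since $\cyc{x,gx^k}=\cyc{x,g}$) already yields $|x|\mid|\Sol_G(x)|$. The divisibility $|\C_G(x)|\mid|\Sol_G(x)|$ is subtler: one cannot simply argue that $\Sol_G(x)$ is a union of right cosets of $\C_G(x)$, since easy examples in $S_5$ show that $\Sol_G(x)\cdot\C_G(x)\nsub\Sol_G(x)$. I would therefore invoke the argument of Hai-Reuven~\cite{Hai} directly; this is the main obstacle if one wished to reprove the lemma from scratch.

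For~(vi), the pairwise-commutativity hypothesis together with $x\in\Sol_G(x)$ forces $\Sol_G(x)\sue\C_G(x)$, and combined with~(i) this gives $\Sol_G(x)=\C_G(x)$, an abelian subgroup. Hence every soluble subgroup of $G$ containing $x$ lies in $\Sol_G(x)$ and is abelian; one then shows, either directly or by passing to $G/R(G)$ via~(iv) and using the structure of groups with trivial soluble radical, that $\cyc{x^G}$ is an abelian normal subgroup of $G$, so $x\in R(G)$, $\Sol_G(x)=G$, and $G$ is abelian. Item~(v) then follows: if $|\Sol_G(x)|=p$ then $|x|\in\{1,p\}$; if $|x|=1$ then $\Sol_G(x)=G$ and $|G|=p$, while if $|x|=p$ then $\cyc{x}\sue\Sol_G(x)$ forces $\Sol_G(x)=\cyc{x}$, which is abelian, so~(vi) gives $G$ abelian, whence $G=\C_G(x)\sue\Sol_G(x)$ and $|G|=p$.
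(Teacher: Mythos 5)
Note first that the paper itself offers no proof of Lemma~\ref{sol}: it is imported verbatim from \cite{Akbari,Hai}, so there is no in-paper argument to compare yours with. On its own terms, your treatment of (ii), (iii) and (iv) is correct and standard (the forward/backward solubility argument for $\cyc{x,g}N/N$ is exactly right, and it does give both $\Sol_{G/N}(xN)=\Sol_G(x)/N$ and the coset decomposition yielding $|N|\mid|\Sol_G(x)|$). The right-multiplication argument for $|x|\mid|\Sol_G(x)|$ is also fine, and your decision to delegate the harder divisibility $|\C_G(x)|\mid|\Sol_G(x)|$ to \cite{Hai} is legitimate and in fact mirrors what the paper does; your warning that $\Sol_G(x)$ is in general not a union of cosets of $\C_G(x)$ is well taken, since that is precisely why Hai-Reuven's counting argument is needed.

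The genuine gap is in (vi). Your reduction is sound up to the point that $\Sol_G(x)=\C_G(x)$ is an abelian subgroup (one can even push a little further: for $g\in\N_G(\C_G(x))$ the subgroup $\C_G(x)\cyc{g}$ is soluble and contains $x$, so $\C_G(x)$ is self-normalizing). But the decisive step --- ``one then shows \dots that $\cyc{x^G}$ is an abelian normal subgroup of $G$'' --- is the whole content of the statement and is only asserted. Nothing you have established implies that distinct conjugates of $x$ commute, nor even that $x^g\in\Sol_G(x)$ for $g\notin\C_G(x)$: solubility of $\cyc{x,x^g}$ is simply not accessible from the hypothesis without a substantive new idea. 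Passing to $G/R(G)$ via (iv) only disposes of the case $R(G)\neq 1$ (by induction on $|G|$); when $R(G)=1$ the problem is untouched, and ``the structure of groups with trivial soluble radical'' is a gesture, not an argument. Since your forward direction of (v) invokes (vi), it inherits the gap; if you want (v) independently, note that $\N_G(\cyc{x})\sue\Sol_G(x)$, so $|\Sol_G(x)|=p$ with $x\neq 1$ forces $\cyc{x}=\N_G(\cyc{x})$ to be self-normalizing of prime order, and Lemma~\ref{self} then gives $G$ soluble, hence $\Sol_G(x)=G$ of order $p$ --- though that lemma, too, is cited rather than proved here. So as written, the centralizer half of (i), item (vi), and hence (v) rest either on the cited sources or on an unproved claim.
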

\begin{lemma}\cite[Lemma 3.2]{Mousavi}\label{self}
Finite insoluble groups do not have self-normalizing subgroups of prime order.
\end{lemma}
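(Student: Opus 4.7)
The plan is to argue by contradiction. Suppose $G$ is a finite insoluble group and $H = \cyc{x}$ is a self-normalizing subgroup of $G$ of prime order $p$, so $\N_G(\cyc{x}) = H$. The goal is to show that this hypothesis already forces $G$ to be soluble, yielding the contradiction.

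First I would observe that $\C_G(x) = H$: the inclusion $\C_G(x) \sub \N_G(\cyc{x}) = H$ is automatic, while $H \sub \C_G(x)$ because $H$ is cyclic. Next, $H$ is in fact a Sylow $p$-subgroup of $G$: picking $P \in \Syl_p(G)$ with $H \sub P$, the standard fact that normalizers grow in proper subgroups of $p$-groups would give $H < \N_P(H)$ if $H < P$, contradicting $\N_P(H) \sub \N_G(H) = H$. So $H = P$.

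Having $H$ as a Sylow $p$-subgroup lying in the centre of its own normalizer (in fact $H = \Z(\N_G(H))$ here, since $\N_G(H) = H$ is abelian), Burnside's normal $p$-complement theorem applies and produces a normal $p$-complement $K \nor G$, giving $G = K \rtimes H$. Coprimality of orders then yields $\C_K(x) \sub \C_G(x) \cap K = H \cap K = 1$, so $x$ acts fixed-point-freely on $K$ by conjugation.

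Finally, Thompson's theorem on fixed-point-free automorphisms of prime order ensures that $K$ is nilpotent, hence soluble; then $G = K \rtimes H$ is soluble, contradicting our assumption. The main obstacle is not any single computation but the correct sequencing of heavy machinery: one must first recognize $H$ as a self-centralizing Sylow $p$-subgroup, then invoke Burnside's $p$-complement theorem to reduce the problem to the action of a prime-order element on the complement, and only then apply Thompson's (deep) theorem to convert the fixed-point-free action into nilpotency.
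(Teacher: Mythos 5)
Your argument is correct: the chain $\C_G(x)=\N_G(\cyc{x})=\cyc{x}$, the normalizer-growth argument forcing $\cyc{x}\in\Syl_p(G)$, Burnside's normal $p$-complement theorem, and Thompson's theorem on fixed-point-free automorphisms of prime order (equivalently, nilpotency of Frobenius kernels) fit together exactly as you describe, with the degenerate case $K=1$ being trivially soluble anyway. The paper does not prove this lemma itself but cites it from \cite{Mousavi}, and your proof is the standard route one would expect there, so there is nothing further to reconcile.
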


\begin{lemma}\cite[Lemma 3.3]{Mousavi}\label{exp}
Let $G$ be a finite group and $x\in G$. Then either $\N_G(\cyc{x})=\Sol_G(x)$ or $|\Sol_G(x)|> \ell|x|$, where $\ell=\min \{|\cyc{x}: \cyc{x}\cap\cyc{x^y}|\,\big{|}\, y\not\in\N_G(\cyc{x})\}$.
\end{lemma}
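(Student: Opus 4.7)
The plan is to start by noticing that $\N_G(\cyc{x})\sue\Sol_G(x)$ holds automatically: whenever $y$ normalizes $\cyc{x}$, the subgroup $\cyc{x,y}$ has $\cyc{x}$ as a normal cyclic subgroup with cyclic quotient generated by $y\cyc{x}$, hence is metacyclic and in particular soluble. Consequently one alternative of the dichotomy is free, and what remains is to show that if $\Sol_G(x)\supsetneq\N_G(\cyc{x})$ then $|\Sol_G(x)|>\ell|x|$.

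For this I would pick any $y\in\Sol_G(x)\setminus\N_G(\cyc{x})$ and set $H=\cyc{x,y}$. Then $H$ is soluble by the choice of $y$, and since every $z\in H$ gives $\cyc{x,z}\sub H$ soluble, the whole subgroup $H$ sits inside $\Sol_G(x)$; in particular the double coset $\cyc{x}\,y\,\cyc{x}$ is contained in $\Sol_G(x)$. A standard cardinality computation gives
\[
|\cyc{x}\,y\,\cyc{x}|=|\cyc{x}|\cdot[\cyc{x}:\cyc{x}\cap y\cyc{x}y^{-1}]=|x|\cdot[\cyc{x}:\cyc{x}\cap\cyc{x^{y^{-1}}}],
\]
and since $y^{-1}\notin\N_G(\cyc{x})$, the index on the right is among those appearing in the minimum defining $\ell$, hence is at least $\ell$.

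To close, I would verify that $\cyc{x}$ is disjoint from $\cyc{x}\,y\,\cyc{x}$: an equality $x^iyx^j=x^k$ forces $y\in\cyc{x}\sub\N_G(\cyc{x})$, contradicting the choice of $y$. Adding the disjoint contributions from $\cyc{x}\sub\Sol_G(x)$ and from the double coset yields $|\Sol_G(x)|\geqslant|x|+\ell|x|>\ell|x|$, as required. The argument is essentially a counting exercise around a single double coset; the only point needing attention is that $\ell$ is defined as a minimum over \emph{all} $y\notin\N_G(\cyc{x})$ rather than over $\Sol_G(x)\setminus\N_G(\cyc{x})$, but this direction of minimality is exactly what the double coset bound needs, so it introduces no real obstacle.
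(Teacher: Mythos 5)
Your argument is correct, and every step checks out: the inclusion $\N_G(\cyc{x})\sue\Sol_G(x)$ via the metacyclic observation, the fact that the soluble subgroup $\cyc{x,y}$ (hence the double coset $\cyc{x}y\cyc{x}$) lies inside $\Sol_G(x)$, the cardinality formula $|\cyc{x}y\cyc{x}|=|x|\cdot|\cyc{x}:\cyc{x}\cap\cyc{x^{y^{-1}}}|$ together with the remark that $y^{-1}\not\in\N_G(\cyc{x})$ so this index is at least $\ell$, and the disjointness of the double coset from $\cyc{x}$, giving $|\Sol_G(x)|\geqslant |x|+\ell|x|>\ell|x|$. Note that the present paper does not prove this lemma at all --- it is quoted from \cite[Lemma 3.3]{Mousavi} --- so there is no in-paper proof to compare against; your double-coset count is a clean, self-contained justification of the cited statement (and in fact yields the slightly stronger bound $|\Sol_G(x)|\geqslant|\N_G(\cyc{x})|+\ell|x|$, since the whole double coset avoids $\N_G(\cyc{x})$, not just $\cyc{x}$).
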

By Lemma~\ref{exp}, if $|x|=p$ for some prime number $p$, then either $\N_G(\cyc{x})=\Sol_G(x)$ or $|\Sol_G(x)|> p^2$.

\begin{theorem}\cite[Theorem 4.1]{Wilson}\label{SZ}
If $q = 2^{2n+1}$, $n > 1$, then the maximal subgroups of  the Suzuki group, $\Sz(q)$,  are (up to conjugacy)
\begin{itemize}
\item[(i)] $E_q\cdot E_q\cdot C_{q-1}$, where $E_q$ is elementary abelian $2$-group of order $q$;
\item[(ii)] $D_{2(q-1)}$;
\item[(iii)] $C_{q+1+\sqrt{2q}}:C_4$;
\item[(iv)] $C_{q+1-\sqrt{2q}}:C_4$;
\item[(v)] $\Sz(q_0)$, where $q = q_0^r$, $r$ is prime, and $q_0 > 2$.
\end{itemize}
\end{theorem}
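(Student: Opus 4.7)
The plan is to establish the classification by exploiting the defining doubly transitive action of $\Sz(q)$ on the Suzuki-Tits ovoid $\mathcal O$ of size $q^2+1$ in $\mathrm{PG}(3,q)$ and the associated BN-pair structure. First I would identify the Borel subgroup $B=E_q{\cdot}E_q{\cdot}C_{q-1}$ as the stabilizer of a point of $\mathcal O$: the unipotent radical is the nilpotency-class-$2$, exponent-$4$ Sylow $2$-subgroup of order $q^2$, and the Levi complement is a Singer-type cyclic group of order $q-1$ acting fixed-point-freely on the quotient. Since $\Sz(q)$ is $2$-transitive on $\mathcal O$, the point stabilizer $B$ is maximal, giving family (i).

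Next I would classify the maximal tori. Direct computation with the Suzuki matrices shows there are exactly three conjugacy classes of cyclic maximal tori, of orders $q-1$, $q-1+\sqrt{2q}$, and $q+1-\sqrt{2q}$, corresponding to the factorization $|\Sz(q)|=q^2(q-1)(q-1+\sqrt{2q})(q+1-\sqrt{2q})$. For the split torus $T_1$ of order $q-1$, the normalizer $N_G(T_1)$ contains an involution inverting $T_1$ and one checks $N_G(T_1)=D_{2(q-1)}$, yielding family (ii). For the two nonsplit tori $T_2, T_3$ of orders $q\pm\sqrt{2q}+1$, the Weyl-group-like quotient $N_G(T_i)/T_i$ is generated by the order-$4$ element coming from the Frobenius $x\mapsto x^{2^{n+1}}$ acting on the $T_i$-fixed structure, producing the Frobenius groups $T_i:C_4$ of families (iii) and (iv). The subfield embeddings $\Sz(q_0)\hookrightarrow \Sz(q)$ with $q=q_0^r$, $r$ prime, $q_0>2$, arise as the fixed-point subgroups of field automorphisms and their maximality follows from the usual argument that no intermediate closed subsystem subgroup exists when $r$ is prime.

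The principal obstacle is proving completeness: any maximal subgroup $M$ of $\Sz(q)$ must appear in the above list. For this I would follow Suzuki's original approach by studying the induced action of $M$ on $\mathcal O$. If $M$ fixes a point, then $M\le B$ and maximality forces $M=B$. Otherwise $M$ is transitive on some $M$-orbit, and one analyzes the involution-commuting graph together with the very restricted centralizer structure in $\Sz(q)$ (every involution is conjugate, with centralizer a Sylow $2$-subgroup). A short-orbit analysis, combined with the fact that an insoluble proper subgroup forces a subfield structure via Suzuki's $\{2\}$-local characterization of $\Sz(q_0)$, leaves exactly the torus normalizers and subfield subgroups as the remaining options. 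The delicate arithmetic step is checking that the numbers $q-1$, $q\pm\sqrt{2q}+1$ are pairwise coprime, so that a maximal subgroup meeting two torus classes nontrivially must in fact contain a full Sylow subgroup of one of them and reduce to a previously classified case.
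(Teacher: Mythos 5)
The paper does not prove this statement at all: it is quoted directly from Wilson's book (ultimately Suzuki's 1962 classification), so there is no internal proof to compare against, and citing the literature is the appropriate move for a result of this depth. Judged on its own terms, your proposal is an outline of Suzuki's original argument rather than a proof. The easy half --- exhibiting the five families: the Borel subgroup as point stabilizer in the $2$-transitive action on the ovoid of size $q^2+1$, the three torus normalizers, and the subfield subgroups --- is described essentially correctly, modulo slips: the nonsplit tori have orders $q+\sqrt{2q}+1$ and $q-\sqrt{2q}+1$ (not $q-1+\sqrt{2q}$), the correct factorization is $|\Sz(q)|=q^2(q^2+1)(q-1)$ with $q^2+1=(q+\sqrt{2q}+1)(q-\sqrt{2q}+1)$, and the element of order $4$ normalizing $T_i$ is an element of $\Sz(q)$ itself rather than the field Frobenius, although it acts on $T_i$ in a way governed by the twisting automorphism $x\mapsto x^{2^{n+1}}$.

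The entire content of the theorem, however, is completeness: every maximal subgroup lies in one of these five classes. There your text is a plan, not an argument. Phrases such as ``a short-orbit analysis'', ``Suzuki's $\{2\}$-local characterization of $\Sz(q_0)$'', and ``the usual argument that no intermediate subgroup exists when $r$ is prime'' name the tools without carrying out the case analysis that constitutes Suzuki's proof: one cannot extract from the sketch why an insoluble proper subgroup must be a subfield subgroup, why a soluble subgroup not fixing a point of the ovoid must lie in a torus normalizer or in $N_G(T_1)=D_{2(q-1)}$, or why the subfield subgroups are maximal precisely when the field extension degree is prime. The coprimality observation at the end is correct (all of $q-1$, $q\pm\sqrt{2q}+1$ are odd and pairwise coprime) but it is only one small ingredient. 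So there is a genuine gap in the attempt as a proof; for the purposes of this paper you should simply cite Suzuki or Wilson, exactly as the author does.
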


A divisor $r$ of $q^d-1$ is said to be a primitive divisor if $r\nmid q^i-1$ for $i<d$, and the largest primitive divisor of $q^d-1$ is called the primitive part of  $q^d-1$. If the divisor $r$ is prime, it is called a primitive prime or Zsigmondy prime, denoted by $z_{q,d}$ as mentioned in~\cite{BHD}.
For  any $(q,d)$, the primitive prime divisor of $q^d-1$ is not unique, for example, the primitive prime divisors of $17^6-1$ are $7$, $13$ and $307$. 

%Zsigmondy (1892)~\cite{Zsi}, proved that, for  any $(q,d)$, where $q\geq 2$ and $d\geq 3$, there exists at least one primitive prime divisor of $q^d-1$, unless  $(q,d)=(2,6)$. 

\begin{theorem}\cite[Theorem 1.13.1]{BHD}{\rm(Zsigmondy~\cite{Zsi})}
  Let $q\geq 2$ be a prime power and $d\geq 3$, with $(q, d) \neq (2, 6)$. Then there exists at least one prime $r$ such that $r$ divides $q^d - 1$ but does not divide $q^i-1$ for all $i<d$.
\end{theorem}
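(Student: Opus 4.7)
The plan is to prove Zsigmondy's theorem via the classical argument through cyclotomic polynomials. First, I would invoke the factorization $q^d - 1 = \prod_{k \mid d} \Phi_k(q)$, where $\Phi_k$ denotes the $k$-th cyclotomic polynomial. A prime $r$ dividing $q^d - 1$ is primitive in the sense required precisely when the multiplicative order of $q$ modulo $r$ equals $d$, and this is equivalent to $r \mid \Phi_d(q)$ together with $r \nmid q^i - 1$ for all $i < d$. Thus the problem reduces to exhibiting a prime divisor of $\Phi_d(q)$ that does not already appear in $q^i - 1$ for any $i < d$.

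Next I would establish the key number-theoretic lemma: if a prime $r$ divides both $\Phi_d(q)$ and $q^i - 1$ for some $i < d$, then $r$ is the largest prime divisor of $d$, and it contributes to $\Phi_d(q)$ with $r$-adic valuation exactly $1$ whenever $d \geq 3$. This follows from the lifting-the-exponent principle applied to $v_r(q^n - 1)$, combined with the coprimality relations among the factors $\Phi_k(q)$ dictated by the product decomposition above.

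The argument then closes by comparing the size of $\Phi_d(q)$ against the bound supplied by the lemma. From $\Phi_d(q) = \prod_{\zeta}(q - \zeta)$ taken over primitive $d$-th roots of unity $\zeta$, one obtains the elementary estimate $\Phi_d(q) \geq (q-1)^{\varphi(d)}$, which for $q \geq 2$ can be tightened slightly. If $\Phi_d(q)$ admitted no primitive prime divisor, then by the lemma its full value would be bounded by the largest prime factor of $d$, contradicting the lower bound in all but finitely many pairs $(q,d)$.

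The main obstacle is the sharp bookkeeping required to show that $(q,d) = (2,6)$ is the unique exceptional pair with $d \geq 3$: here $\Phi_6(2) = 3$, and $3$ already divides $2^2 - 1$, so the conclusion fails, while all other small pairs must be checked individually to confirm that no further exceptions arise. Since this theorem is invoked in our paper only as an auxiliary tool and is the well-known classical result of Zsigmondy, I would in practice reference the detailed proof from \cite{BHD} or \cite{Zsi} rather than reproduce the full enumeration here.
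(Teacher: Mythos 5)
The paper does not prove this statement at all: it is quoted verbatim as Theorem 1.13.1 of \cite{BHD} (Zsigmondy's theorem) and used purely as an external tool, so there is no internal proof to compare against, and your decision to defer the full details to \cite{BHD} or \cite{Zsi} is consistent with the paper's treatment. Your outline is the standard cyclotomic-polynomial proof found in those sources: the factorization $q^d-1=\prod_{k\mid d}\Phi_k(q)$, the lemma that a non-primitive prime divisor $r$ of $\Phi_d(q)$ must be the largest prime factor of $d$ and occurs in $\Phi_d(q)$ with $r$-adic valuation exactly $1$ (for $d\geq 3$), and the size comparison forcing a primitive prime except in finitely many cases; all of this is correctly stated. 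The one place where your sketch undersells the work is the closing estimate: the bound $\Phi_d(q)\geq (q-1)^{\varphi(d)}$ is vacuous precisely when $q=2$ (it gives only $\Phi_d(2)\geq 1$), and $q=2$ is exactly where the exceptional pair $(2,6)$ lives, so "tightened slightly" hides a genuinely necessary refinement (e.g.\ a strict bound such as $\Phi_d(q)>q^{\varphi(d)-1}$ or a separate analysis of small $d$ at $q=2$) before the contradiction with $\Phi_d(q)\leq r\leq d$ goes through. As a proof plan it is sound and matches the classical argument; as written it would need that estimate made precise to be complete.
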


\begin{lemma}\cite[Lemma 1.13.3]{BHD}\label{Primitive}
\begin{itemize}
\item[(i)] Let $q \geq 2$ be a prime power and $n\geq 3$. Then $q^d + 1$ is
divisible by $z_{q,2d}$ if and only if $(q, d)\neq (2, 3)$.
\item[(ii)] If $z_{q,d}$ divides $q^m - 1$ then $d$ divides $m$.
\item[(iii)] The prime $z_{q,d}\equiv 1 \pmod{d}$, so in particular, $z_{q,d} > d$.
\end{itemize}
\end{lemma}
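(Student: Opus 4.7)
The plan is to prove all three parts from the single observation that, for any prime $r$ not dividing $q$, the multiplicative order of $q$ modulo $r$ fully determines which integers $q^i-1$ the prime $r$ divides. I would handle (ii) and (iii) first, as both are immediate order-theoretic consequences of the definition of a primitive prime divisor, and then invoke Zsigmondy's existence theorem (recalled immediately above in the paper) to derive (i).

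For (ii), set $r := z_{q,d}$ and let $e$ denote the multiplicative order of $q$ in $(\mathbb{Z}/r\mathbb{Z})^\times$, which is defined since $r \mid q^d - 1$ forces $r \nmid q$. The standard fact $r \mid q^i - 1 \Longleftrightarrow e \mid i$, combined with the primitivity of $r$ (which says $r \nmid q^i - 1$ for every $i < d$ but $r \mid q^d - 1$), pins down $e = d$. The hypothesis $r \mid q^m - 1$ therefore gives $d = e \mid m$, as required. For (iii), Fermat's little theorem yields $q^{r-1} \equiv 1 \pmod{r}$, so by the same order argument $d = e$ divides $r - 1$; equivalently $r \equiv 1 \pmod{d}$, and in particular $r \geq d + 1 > d$.

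For (i), I factor $q^{2d} - 1 = (q^d - 1)(q^d + 1)$. By Zsigmondy's theorem, a primitive prime divisor $z_{q,2d}$ exists precisely when $(q,2d) \neq (2, 6)$, equivalently $(q, d) \neq (2, 3)$ (the range on $d$ in the hypothesis ensures $2d \geq 3$ so that the theorem applies at all). Given existence, primitivity of $z_{q,2d}$ prevents it from dividing the factor $q^d - 1$, and so forces it onto the remaining factor $q^d + 1$. Conversely, if $(q,d) = (2,3)$ then $z_{q,2d}$ is simply undefined, so the divisibility statement fails vacuously and the biconditional reads correctly.

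The only mild obstacle is the bookkeeping in part (i) surrounding the single Zsigmondy exception $(q,2d) = (2,6)$, which is routine once one is willing to invoke the existence theorem cited just above; the rest of the lemma is a clean unwinding of the definition of primitive prime divisor via the multiplicative order of $q$ modulo $z_{q,d}$.
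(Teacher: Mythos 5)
Your proof is correct: parts (ii) and (iii) follow exactly as you say from the fact that the multiplicative order of $q$ modulo $z_{q,d}$ equals $d$ (together with Fermat's little theorem for (iii)), and part (i) follows from the factorization $q^{2d}-1=(q^d-1)(q^d+1)$ plus the existence statement of Zsigmondy's theorem, with the exceptional case $(q,2d)=(2,6)$ accounting for the excluded pair $(q,d)=(2,3)$. The paper itself gives no proof of this lemma --- it is quoted verbatim from \cite[Lemma 1.13.3]{BHD} --- so there is no argument to compare against; your order-theoretic derivation is the standard one and fills that citation faithfully.
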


\section{Main Results}
In this section, we present our main results, including the compatibility condition for the normalizer-solubilizer conjecture (Theorem~\ref{T-3.4}). Additionally, we discuss the connectedness of the normalizer-solubilizer conjecture in the special case where the normalizer of an element $x$ in group $G$ is a Frobenius group, and its kernel $\C_G(x)$ has prime index (Theorem~\ref{frob}).

\begin{lemma}\label{L-3.1}
Let $G$ be a finite group and $x\in G$. Then $\N_G(\cyc{x})$ acts by conjugation on $\Sol_G(x)$.
\end{lemma}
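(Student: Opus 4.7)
The proof will be a direct application of parts (ii) and (iii) of Lemma~\ref{sol}, which together already carry essentially all the content needed. The plan is to show that for every $n \in \N_G(\cyc{x})$, conjugation sends $\Sol_G(x)$ onto itself, i.e., $\Sol_G(x)^n = \Sol_G(x)$.

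First I would unpack what it means for $n$ to lie in $\N_G(\cyc{x})$: conjugation by $n$ is an automorphism of $\cyc{x}$, so $x^n$ is a generator of $\cyc{x}$, and in particular $\cyc{x^n} = \cyc{x}$. This is the only structural fact from the normalizer hypothesis that enters the argument.

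Next, by Lemma~\ref{sol}(iii) applied to $g = n$, we have $\Sol_G(x)^n = \Sol_G(x^n)$. Since $\cyc{x^n} = \cyc{x}$ by the previous step, Lemma~\ref{sol}(ii) gives $\Sol_G(x^n) = \Sol_G(x)$. Chaining these equalities yields $\Sol_G(x)^n = \Sol_G(x)$, which is exactly the claim that $\N_G(\cyc{x})$ stabilizes $\Sol_G(x)$ under conjugation. Therefore the restriction of the conjugation action of $G$ on itself defines an action of $\N_G(\cyc{x})$ on $\Sol_G(x)$.

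There is no real obstacle here; the only point one must be careful about is that $n$ need not centralize $x$, so $x^n$ is in general a different element of $\cyc{x}$ from $x$. It is precisely Lemma~\ref{sol}(ii) that absorbs this discrepancy, since it asserts that $\Sol_G$ depends only on the cyclic subgroup generated by its argument. If one preferred a self-contained argument bypassing Lemma~\ref{sol}, the same idea could be phrased directly: for $y \in \Sol_G(x)$ one computes $\cyc{x,y}^n = \cyc{x^n, y^n} = \cyc{x, y^n}$ (using $\cyc{x^n} = \cyc{x}$), and since conjugation preserves solubility, $\cyc{x, y^n}$ is soluble, hence $y^n \in \Sol_G(x)$.
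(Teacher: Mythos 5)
Your proposal is correct and is essentially the paper's argument: the paper proves the lemma by the direct computation $\cyc{x,y^t}=\cyc{x^{t^{-1}},y}^t=\cyc{x^i,y}^t=\cyc{x,y}^t$, which is exactly the self-contained variant you sketch at the end, while your main route via Lemma~\ref{sol}(ii)--(iii) is just the same idea packaged through the preliminary lemma. No gaps.
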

\begin{proof}
Assume that $y\in\Sol_G(x)$ and $t\in \N_G(\cyc{x})$. Then for some $i$ coprime to $|x|$,
\[\cyc{x,y^t}=\cyc{x^{t^{-1}},y}^t=\cyc{x^i,y}^t=\cyc{x,y}^t.\] 
 Hence $\cyc{x,y^t}$ is soluble and so $y^t\in\Sol_G(x)$. 
\end{proof}

We consider the following notations for this section. Let $G$ be a finite group and $x\in G$. We set $\N_x=\N_G(\cyc{x})$ and $\C_x=\C_G(x)$. By Lemma~\ref{L-3.1}, for any subgroup $\h$ of $G$ such that $\C_x\sub\h\sub\N_x$, $\h$ acts by conjugation on $\Sol_G(x)$. The number of orbits resulting from this action is denoted by $\ell_{\h}$. So for $\h=\C_x$ or $\N_x$, we have $\ell_{\C_x}$ and $\ell_{N_x}$.

As $\h\nor\N_x$, then, $\h\bk\{1\}=\cup_{i=1}^{\ell} g_i^{\N_x}$ for some $g_i\in\h$ and $\ell\in\mathbb{N}$. Also for any $g\in \h$, we set $$n(g,x):=|\Sol_{\C_G(g)}(x)|/|\N_{\C_G(g)}(\cyc{x})|.$$
Note that $n(1,x)=|\Sol_G(x)|/|\N_x|$. Now, for any $u\in \h$, as
 $\N_{\C_G(g)}(\cyc{x})=\N_x\cap\C_G(g)$, then 
\[|\N_x\cap\C_G(g^u)|=|(\N_x\cap\C_G(g))^u|=|\N_x\cap\C_G(g)|.\]
Also,
\[|\Sol_G(x)\cap\C_G(g^u)|=|\Sol_G(x^{u^{-1}})\cap\C_G(g)|=|\Sol_G(x)\cap\C_G(g)|.\]
The last equality follows from Lemma~\ref{sol}-(ii), since $\cyc{x}=\cyc{x^u}$.  Therefore, for any $u\in \h$, $n(g,x)=n(g^u,x)$.

\begin{lemma}\label{L-3.2}
Let $G$ be a finite group. By the notation above, for any $x\in G$, we have
\[|\h|\ell_{\h}=|\Sol_G(x)|+|\N_x|\sum_{i=1}^{\ell} n(g_i, x).\]
\end{lemma}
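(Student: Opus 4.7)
My plan is to read the right-hand side as the output of the Cauchy--Frobenius (orbit-counting) lemma applied to the action of $\h$ on $\Sol_G(x)$ by conjugation, which Lemma~\ref{L-3.1} guarantees. Since that action has $\ell_{\h}$ orbits,
\[
|\h|\,\ell_{\h}=\sum_{g\in\h}|\Fix(g)|=\sum_{g\in\h}|\Sol_G(x)\cap\C_G(g)|,
\]
because an element $y\in\Sol_G(x)$ is fixed by $g$ (under conjugation) precisely when $y\in\C_G(g)$.

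I would then split off the term $g=1$, which contributes $|\Sol_G(x)|$, and regroup the remaining sum by the $\N_x$-orbits on $\h\setminus\{1\}$. The partition $\h\setminus\{1\}=\bigcup_{i=1}^{\ell} g_i^{\N_x}$ is legitimate because $\h\nor\N_x$: the quotient $\N_x/\C_x$ embeds into $\Aut(\cyc{x})$, which is abelian, so every subgroup of $\N_x$ containing $\C_x$ is normal in $\N_x$. The paragraph preceding the lemma already observes, via Lemma~\ref{sol}(ii)--(iii), that $|\Sol_G(x)\cap\C_G(g)|$ depends only on the $\N_x$-conjugacy class of $g$; hence each $g\in g_i^{\N_x}$ contributes the same value $|\Sol_G(x)\cap\C_G(g_i)|$, and the $i$-th orbit has size $|g_i^{\N_x}|=|\N_x|/|\N_x\cap\C_G(g_i)|$.

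The last step is to rewrite this orbit contribution using the identifications $\N_x\cap\C_G(g_i)=\N_{\C_G(g_i)}(\cyc{x})$ and $\Sol_G(x)\cap\C_G(g_i)=\Sol_{\C_G(g_i)}(x)$, together with the definition $n(g_i,x)=|\Sol_{\C_G(g_i)}(x)|/|\N_{\C_G(g_i)}(\cyc{x})|$, so that the total contribution of the $i$-th orbit collapses to exactly $|\N_x|\,n(g_i,x)$. Summing over $i=1,\dots,\ell$ and adding back the $g=1$ term yields the stated identity. I do not anticipate any serious obstacle: the whole argument is a direct Cauchy--Frobenius computation, and the only technical points, namely the normality $\h\nor\N_x$ (needed to form $\N_x$-orbits on $\h$) and the invariance of $|\Sol_G(x)\cap\C_G(g)|$ on those orbits, are both already available from the setup preceding the lemma.
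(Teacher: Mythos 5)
Your proposal is correct and follows essentially the same route as the paper's own proof: the Cauchy--Frobenius count $|\h|\ell_{\h}=\sum_{g\in\h}|\Sol_G(x)\cap\C_G(g)|$, splitting off $g=1$, and regrouping the rest by the $\N_x$-orbits $g_i^{\N_x}$ using orbit--stabilizer together with the invariance of $|\Sol_G(x)\cap\C_G(g)|$ and $|\N_x\cap\C_G(g)|$ along these orbits, so each orbit contributes $|\N_x|\,n(g_i,x)$. Your extra remark justifying $\h\nor\N_x$ via the embedding of $\N_x/\C_x$ into the abelian group $\Aut(\cyc{x})$ is a harmless (and correct) elaboration of what the paper simply asserts.
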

\begin{proof}
We note that
$|\h|\ell_{\h}=\sum_{g\in\h}|\Fix_Y(g)|$, where $Y=\Sol_G(x)$, 
\allowdisplaybreaks{\begin{align*}
|\h|\ell_{\h} &=\sum_{g\in\h}|\Fix_Y(g)|\\
&=|\Sol_G(x)|+\sum_{1\neq g\in\h}|\Fix_Y(g)|\\
&=|\Sol_G(x)|+\sum_{1\neq g\in\h}|\C_G(g)\cap\Sol_G(x)|\\
 &=|\Sol_G(x)|+\sum_{1\neq g\in\h}|\Sol_{\C_G(g)}(x)|\\
 &=|\Sol_G(x)|+\sum_{1\neq g\in \h}n(g,x)|\N_{\C_G(g)}(\cyc{x})|\\
&=|\Sol_G(x)|+\sum_{1\neq g\in \h}n(g,x)|\C_{\N_x}(g)|\\
&=|\Sol_G(x)|+\sum_{i=1}^{\ell} \sum_{g\in g_i^{\N_x}} n(g, x)|\C_{\N_x}(g)|\\
&=|\Sol_G(x)|+\sum_{i=1}^{\ell} n(g_i, x)|\C_{\N_x}(g_i)||g_i^{\N_x}|\\
&=|\Sol_G(x)|+|\N_x|\sum_{i=1}^{\ell} n(g_i, x).
\end{align*}}
\end{proof}

By Lemma~\ref{L-3.2} we can provide a formula for calculating the number of orbits resulting from the conjugation action of $\h$ on $\Sol_G(x)$ (especially for $\h=\C_x$ or $\N_x$). We note that, if $G$ is soluble, then for any $x\in G$, and any $H\sub G$, $\Sol_H(x)=H$. Hence, for any $1\neq g\in \C_x$, $$n(g,x)=|\Sol_{\C_G(g)}(x)|/|\N_{\C_G(g)}(\cyc{x})|=|\C_G(g):\N_x\cap\C_G(g)|.$$ According to Lemma~\ref{L-3.2}, we have:
\begin{equation}\label{E-1}
\ell_{\h}=|G:\h|+|\N_x:\h|\sum_{i=1}^{\ell} |\C_G(g_i):\N_x\cap\C_G(g_i)|,
\end{equation}
where $\h\bk\{1\}=\cup_{i=1}^{\ell} g_i^{\N_x}$.

Instead of the solubility assumption, a weaker assumption can be considered. For example, let $G=KH$ be a Frobenius group with kernel $K$ and complement $H$. Suppose that $1\neq x\in K$. Since $K\sub R(G)$, so $\Sol_G(x)=G$ and, for any $g\in\C_x\bk\{1\}$, as $\C_G(g)\sub K$, $\Sol_{\C_G(g)}(x)=\C_G(g)$. Hence, by Lemma~\ref{L-3.2}, $\ell_{\C_x}$ is given by Equation~(\ref{E-1}) with substituting $\C_x$ for $\h$. Additionally, if $K$ is abelian, then $\C_x=\C_G(g)=K\sub\N_x$, so $$\ell_{\C_x}=|H|+\ell|\N_x: K|,$$ where $\ell+1$ is the number of orbits resulting from the action of $\N_x/C_x$ on $K$, for $\C_x$ acts trivially on $K$.

\begin{remark}\label{R-3.3}
We note in Lemma~\ref{L-3.2},
\[\ell_{\h}=|\Sol_G(x)|/|\h|+|\N_x: \h|\sum_{i=1}^{\ell} n(g_i, x).\]
So, if $\sum_{i=1}^{\ell} n(g_i, x)$ is an integer, where $g_i\in\h$, then $|\h|\mid |\Sol_G(x)|$, in particular when $\h=\N_x$. On the other hand,
\[\frac{|\C_x|}{|\N_x|}\ell_{\C_x}=\frac{|\Sol_G(x)|}{|\N_x|}+\sum_{i=1}^{\ell} n(g_i, x).\]
Now, if normalizer conjecture is true, this means that, for any group $G$ and for any $x\in G$, $|\N_x|\mid |\Sol_G(x)|$, then for any $1\leq i\leq \ell$, $n(g_i,x)$ is an integer, where $g_i\in\C_x\bk\{1\}$. So $\frac{|\C_x|}{|\N_x|}\ell_{\C_x}$ is an integer.
\end{remark}

Let $G\cong A_5$, the alternating group of degree 5. Using GAP~\cite{GAP}, we compute the solubilizer of the conjugacy class representative of elements in $G$,  $|\C_x|$, $|\N_x|$  and $\ell_{\C_x}$.

\begin{table}[h]
\begin{tabular}{|c|c|c|c|c|}\hline
$x$ &(\,)& (1\, 2)	& (1\,2\,3)& (1\,2\,3\,4\,5)\\ 
\hline
$|\Sol_G(x)|$ & 60 & 36& 24& 10\\ 
\hline
$|\N_x|$ & 60 & 4& 6& 10\\ 
\hline
Structure of $\N_x$ & $A_5$ & $C_2\times C_2$ & $S_3$ & $D_{10}$\\
\hline
$|\C_x|$ & 60& 4& 3& 5\\ 
\hline
$\ell_{\C_x}$ & 1&12& 10&6\\ 
\hline
$\frac{|\C_x|}{|\N_x|}\ell_{\C_x}$ &  1& 12& 5& 3\\
\hline
\end{tabular}
\caption{Analysis of the group $A_5$ }\label{T-1}
\end{table}
In the group $A_5$, cycles of length $5$ form two distinct conjugacy classes, represented by $(1\, 2\, 3\, 4\, 5)$ and $(2\, 1\, 3\, 4\, 5)=(1\, 2\, 3\, 4\, 5)^{(1\, 2)}$. As $\Sol_G(x^g)=(\Sol_G(x))^g$, $C_{x^g}=\C_x^g$ and $\N_{x^g}=\N_x^g$, so $\ell_{\C_{x^g}}=\ell_{\C_x}$. Therefore, we have the same result as $(1\,2\,3\,4\,5)$ for $x=(2\, 1\, 3\, 4\, 5)$.

By  Table~\ref{T-1}, the value of $\frac{|\C_x|}{|\N_x|}\ell_{\C_x}$ is always an integer for group $A_5$. %Similar calculations have also been conducted for various simple groups, yielding similar results. Therefore, the following theorem can be formulated.
\begin{theorem}\label{T-3.4}
The following statements are equivalent.
\begin{itemize}
\item[(i)] For any finite group $G$ and any $x\in G$, $|\N_x|\mid|\Sol_G(x)|$.
\item[(ii)] For any finite group $G$ and any $x\in G$, $\frac{|\C_x|}{|\N_x|}\ell_{\C_x}$ is an integer.
\end{itemize}
\end{theorem}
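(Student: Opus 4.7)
The driver of both directions is the identity from Remark~\ref{R-3.3},
\[
\frac{|\C_x|}{|\N_x|}\ell_{\C_x}=\frac{|\Sol_G(x)|}{|\N_x|}+\sum_{i=1}^{\ell} n(g_i, x),
\]
where $g_1,\dots,g_\ell$ are $\N_x$-orbit representatives on $\C_x\bk\{1\}$ and $n(g,x)=|\Sol_{\C_G(g)}(x)|/|\N_{\C_G(g)}(\cyc{x})|$. For (i)$\Rightarrow$(ii), each $g_i\in\C_x$ commutes with $x$, so $x\in\C_G(g_i)$; applying (i) to the pair $(\C_G(g_i),x)$ makes $n(g_i,x)$ an integer, and applying (i) to $(G,x)$ itself makes $|\Sol_G(x)|/|\N_x|$ an integer. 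Summing yields (ii).

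For (ii)$\Rightarrow$(i) the plan is strong induction on $|G|$, the case $|G|=1$ being trivial. If $Z(G)=1$, then $\C_G(g_i)$ is a proper subgroup of $G$ for every $g_i\in\C_x\bk\{1\}$ (otherwise $g_i\in Z(G)=1$), so the inductive hypothesis applied to $(\C_G(g_i),x)$ makes each $n(g_i,x)$ an integer. Rearranging the identity above, (ii) for $(G,x)$ handles the left-hand term and induction handles the sum, forcing $|\Sol_G(x)|/|\N_x|\in\mathbb{Z}$. If instead $Z(G)\neq 1$, I would pass to the quotient $\bar G=G/Z(G)$, $\bar x=xZ(G)$, which has strictly smaller order. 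By Lemma~\ref{sol}(iv), $|\Sol_{\bar G}(\bar x)|=|\Sol_G(x)|/|Z(G)|$; every element normalizing $\cyc{x}$ also normalizes $\cyc{x}Z(G)$ (since it centralizes $Z(G)$), hence $\N_x\sub\N_G(\cyc{x}Z(G))$ and $|\N_G(\cyc{x}Z(G))|=|Z(G)|\cdot|\N_{\bar G}(\cyc{\bar x})|$ by the correspondence theorem. The inductive hypothesis gives $|\N_{\bar G}(\cyc{\bar x})|\mid|\Sol_{\bar G}(\bar x)|$, and multiplying by $|Z(G)|$ together with the containment $\N_x\sub\N_G(\cyc{x}Z(G))$ delivers $|\N_x|\mid|\Sol_G(x)|$.

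The delicate step is the central case. If one tried to run the orbit-counting argument from the $Z(G)=1$ case when $Z(G)\neq 1$, each $g\in Z(G)\bk\{1\}$ sits in its own singleton $\N_x$-orbit with $\C_G(g)=G$, recontributing $|\Sol_G(x)|/|\N_x|$ to the sum, and the identity collapses to the strictly weaker $|\N_x|\mid|Z(G)|\cdot|\Sol_G(x)|$, from which one cannot recover (i). Recognising that the central contribution must be sidestepped by a quotient reduction, rather than attacked directly through orbit counting, is the non-obvious ingredient in the proof.
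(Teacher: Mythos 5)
Your proposal is correct and follows essentially the same route as the paper: the direction (i)$\Rightarrow$(ii) via the orbit-counting identity with $\h=\C_x$, and (ii)$\Rightarrow$(i) by induction on $|G|$, splitting into the case $Z(G)=1$ (where every $\C_G(g_i)$ is proper and induction makes each $n(g_i,x)$ an integer) and the case $Z(G)\neq 1$ (quotient by the centre using Lemma~\ref{sol}(iv)). Your treatment of the central case is in fact slightly more careful than the paper's, since you only use the containment $\N_x\sub\N_G(\cyc{x}Z(G))$ rather than the paper's asserted equality $\N_{\bar G}(\cyc{\bar x})=\N_x/Z(G)$, and the containment is all that the divisibility argument requires.
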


\begin{proof}
Assume that (i) holds. Let $G$ be a finite group and $x\in G$. Then for any $g\in\C_x$, by (i), $|\N_{\C_G(g)}(\cyc{x})|$ divides $|\Sol_{\C_G(g)}(x)|$. So $n(g,x)$ is an integer  (note that $n(1,x)=|\Sol_G(x)|/|\N_x|$). Now, by Lemma~\ref{L-3.2},
\[\frac{|\C_x|}{|\N_x|}\ell_{\C_x}=\frac{|\Sol_G(x)|}{|\N_x|}+\sum_{i=1}^{\ell}n(g_i,x),\]
where $\C_x\bk\{1\}=\cup_{i=1}^{\ell}g_i^{\N_x}$. Therefore $\frac{|\C_x|}{|\N_x|}\ell_{\C_x}$  is an integer and (ii) holds.

Assume that (ii) holds. We show that for any group $G$ and any $x\in G$, $|\N_x|\mid |\Sol_G(x)|$. Let $G$ be a finite group and $x\in G$. We will use induction on $|G|$. Assume that for any group $H$ with an order less than $|G|$ and for  any $x\in H$, $|\N_H(\cyc{x})|\mid |\Sol_H(x)|$ (we note that this is true for any group of order less that $60$). We set  $\Z := Z(G)$. Suppose that $\Z \neq 1$ and $\bar{G}=G/\Z$.  Since $\N_{\bar{G}}(\cyc{\bar{x}})=\N_x/\Z$ and $\Sol_{\bar{G}}(\bar{x})=\Sol_G(x)/\Z$ then, by induction, we find that $|\N_x/\Z|\mid |\Sol_G(x)/\Z|$, which implies that $|\N_x|\mid |\Sol_G(x)|$.

Now suppose that, $\Z=1$. Then for all $1\neq g\in\C_x$,  $\C_G(g)$ is a proper subgroup of  $G$ contains $x$. By induction,
\[|\N_{\C_G(g)}(\cyc{x})|\mid|\Sol_{\C_G(g)}(x)|=|\C_G(g)\cap\Sol_G(x)|.\]
Therefore, $n(g,x)$ is an integer and by Lemma~\ref{L-3.2}, with $\h=\C_x$ we have
\[\frac{|\Sol_G(x)|}{|\N_x|}=\sum_{i=1}^{\ell}n(g_i,x)-\frac{|\C_x|}{|\N_x|}\ell_{\C_x},\]
is an integer. Therefore (i) holds.
\end{proof}

In the following we show that, when $\N_x$ is a Frobenius group with kernel $\C_x$ for some $x\in G$, then  $|\N_x|\mid|\Sol_G(x)|$, if $|\N_x: C_x|$ is prime. We note that the conjecture always is true for a soluble group $G$, since $\Sol_G(x)=G$ for any $x\in G$. Therefore, the insoluble case is important.

\begin{theorem}\label{frob}
Let $G$ be a finite insoluble group and $1\neq x\in G$. If $\N_x$ is a Frobenius group with kernel $\C_x$ such that $|\N_x/\C_x|$ is a prime number.  Then $|\N_x|\mid|\Sol_G(x)|$.
\end{theorem}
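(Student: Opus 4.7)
The plan is to reduce the divisibility to a Sylow-style fixed-point count on $\Sol_G(x)$. Set $p:=|\N_x/\C_x|$ and let $H$ be a Frobenius complement in $\N_x=\C_x\rtimes H$, so $H$ is cyclic of order $p$. Because the order of a Frobenius complement is coprime to the order of its kernel, $\gcd(p,|\C_x|)=1$. Combined with Lemma~\ref{sol}(i), which already yields $|\C_x|\mid|\Sol_G(x)|$, it suffices to prove $p\mid|\Sol_G(x)|$.

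By Lemma~\ref{L-3.1}, the subgroup $H\sub\N_x$ acts on $Y:=\Sol_G(x)$ by conjugation. Since $|H|=p$ is prime, every orbit has length $1$ or $p$, so
\[|Y|\equiv|Y\cap\C_G(H)|\pmod{p},\]
and it remains to show $p$ divides $|Y\cap\C_G(H)|$. I will argue that $Y\cap\C_G(H)$ is a union of cosets $yH$, from which this divisibility is immediate. Fix $y\in Y\cap\C_G(H)$ and $h\in H$. Then $yh\in\C_G(H)$ since both factors centralize $H$. For the solubility of $\cyc{x,yh}$, observe that $H$ normalizes $\cyc{x}$ (as $H\sub\N_x$) and centralizes $\cyc{y}$ (as $y\in\C_G(H)$); therefore $H$ normalizes $\cyc{x,y}$. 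Consequently $M:=\cyc{x,y}\cdot H$ is a subgroup in which $\cyc{x,y}\nor M$ is soluble with cyclic quotient of order dividing $p$, so $M$ itself is soluble. Since $yh\in M$, the subgroup $\cyc{x,yh}\sub M$ is soluble and $yh\in Y$, completing the coset claim.

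Putting the pieces together, $p$ divides $|Y|$, and combined with $|\C_x|\mid|\Sol_G(x)|$ and $\gcd(p,|\C_x|)=1$ this gives $|\N_x|=p|\C_x|$ dividing $|\Sol_G(x)|$. The Frobenius hypothesis enters in two distinct ways: the coprimality $\gcd(p,|\C_x|)=1$ allows the divisibility to split cleanly into the factor handled by Lemma~\ref{sol}(i) and the factor $p$ handled by the Sylow argument, while the prime index $|\N_x\colon\C_x|=p$ ensures that the $H$-action produces an honest congruence modulo $p$. The principal obstacle is the coset-stability claim itself; this is precisely where the combination $H\sub\N_x$ and $H\sub\C_G(y)$ forces $H$ to normalize $\cyc{x,y}$, and once this is in hand solubility propagates automatically.
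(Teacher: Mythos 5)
Your proof is correct, and it takes a genuinely different route from the paper. The paper argues by induction on $|G|$: it verifies that $n(g,x)=|\Sol_{\C_G(g)}(x)|/|\N_{\C_G(g)}(\cyc{x})|$ is an integer for every $1\neq g\in\N_x$ (using the induction hypothesis, or solubility of $\C_G(g)$, when $g\in\C_x$, and a coset argument when $g\in\N_x\bk\C_x$), and then feeds this into the orbit-counting identity of Lemma~\ref{L-3.2} with $\h=\N_x$ to extract $|\N_x|\mid|\Sol_G(x)|$. You instead split $|\N_x|=p\,|\C_x|$ into coprime factors: the factor $|\C_x|$ comes for free from Lemma~\ref{sol}(i), and the factor $p$ comes from the congruence $|\Sol_G(x)|\equiv|\Sol_G(x)\cap\C_G(H)|\pmod{p}$ for the conjugation action of the order-$p$ complement $H$, together with the observation that the fixed-point set is a union of cosets $yH$ (since $H$ normalizes $\cyc{x,y}$ and centralizes $y$, so $\cyc{x,y}H$ is soluble). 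That coset-stability trick is exactly what the paper uses, but only locally inside $\C_G(g)$ for $g\in\N_x\bk\C_x$; you apply it globally and thereby dispense with the induction, the base case, and the entire case $1\neq g\in\C_x$. Your argument is more elementary and in fact needs less than the Frobenius hypothesis --- only that $|\N_x:\C_x|$ is a prime not dividing $|\C_x|$ (plus Hai-Reuven's divisibility from Lemma~\ref{sol}(i)) --- whereas the paper's heavier route has the advantage of running through the quantities $n(g,x)$ and Lemma~\ref{L-3.2}, the same machinery underlying the equivalence in Theorem~\ref{T-3.4}.
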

\begin{proof}
Without loss of generality, we can assume that $x\not\in R(G)$. 
Let the result be considered true for every insoluble group of order less than $|G|$ (note that the induction process begins with $A_5$, as shown in Table~\ref{T-1}, where $|x|=3$ or $5$. In these cases, respectively, $\N_G(\cyc{x})\cong S_3$ or $D_{10}$). 

As $\N_x$ is a Frobenius group, $ Z(G)\sub Z(\N_x)=1$. Thus for any $1\neq g\in\N_x$, $\C_G(g)$ is a proper subgroup of $G$. First assume that $1\neq g\in\C_x$. Then $x\in\C_G(g)$, so $\C_x\cap\C_G(g)\neq 1$. If $\C_G(g)$ is soluble, then $ \Sol_{\C_G(g)}(x)=\C_G(g)$. Hence $|\N_{\C_G(g)}(\cyc{x})|\mid|\Sol_{\C_G(g)}(x)|$. Otherwise, since  $|\C_{\C_G(g)}(x)|\mid\Sol_{\C_G(g)}(x)|$ by  Lemma~\ref{sol}-(i),  we can assume that 
$$\N_{\C_G(g)}(\cyc{x})=\N_x\cap\C_G(g)\neq\C_x\cap\C_G(g)=\C_{\C_G(g)}(x).$$ Then $\N_{\C_G(g)}(\cyc{x})$ is Frobenius group with kernel $\C_{\C_G(g)}(x)$. Also,
\[1\neq\frac{\N_{\C_G(g)}(\cyc{x})}{\C_{\C_G(g)}(x)}=\frac{\N_x\cap\C_G(g)}{\C_x\cap\C_G(g)}\cong\frac{(\N_x\cap\C_G(g))\C_x}{\C_x}\hookrightarrow\frac{\N_x}{\C_x}\]
So, $|\N_{\C_G(g)}(\cyc{x}): \C_{\C_G(g)}(x)|$ is prime and by induction
 $|\N_{\C_G(g)}(\cyc{x})|\,\mid\,|\Sol_{\C_G(g)}(x)|$.

Now assume that $g\in\N_x\bk\C_x$. Let $H$ be a complement of  $\C_x$ in $\N_x$. By assumption $H$ is cyclic. We set $H=\cyc{t}$. Then for any  
$g\in\N_x\bk\C_x$, $g\in H^u$ for some $u\in\N_x$, so
 $$|\N_{\C_G(g)}(\cyc{x})|=|\C_{\N_x}(g)|=|H^u|=|H|.$$
Now for any $z\in\Sol_{\C_G(g)}(x)$, as $\cyc{x,z,g}\cong\cyc{x,z}\rtimes\cyc{g}$ is soluble, so $\cyc{x,zg^i}$ is soluble too, for any $i\leq |g|$. Thus $zg^i\in\Sol_{\C_G(g)}(x)$, so $zH^u=z\cyc{g}\subset\Sol_{\C_G(g)}(x)$, for $|\cyc{g}|=|H|$. Hence, the cosets of $H^u$ cover the set $\Sol_{\C_G(g)}(x)$. Therefore, $$|\N_{\C_G(g)}(\cyc{x})|=|H^u|\mid|\Sol_{\C_G(g)}(x)|.$$
Thus, for any $1\neq g\in\N_x$,
  $$n(g,x)=|\Sol_{\C_G(g)}(x)|/|\N_{\C_G(g)}(\cyc{x})|$$ 
   is an integer.
 According to Lemma~\ref{L-3.2} with $\h=\N_x$, we get
\[\ell_{\N_x}=\frac{1}{|\N_x|}|\Sol_G(x)|+\sum_{i=1}^{\ell} n(g_i, x).\]
Therefore,
 $|\N_x|\mid|\Sol_G(x)|$.
\end{proof}

In the \cite[Theorem 3.6]{Mousavi} the authors presented the following theorem.
\begin{theorem}\label{2p}
Let $G$ be a finite insoluble group and for some $x\in G$, $|\Sol_G(x)|=2p$, where $p$ is an odd prime number. Then $G$ is simple and $\N_G(\cyc{x})=\Sol_G(x)$.	
\end{theorem}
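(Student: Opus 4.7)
I would tackle the two claims---$\N_x = \Sol_G(x)$ and simplicity of $G$---in sequence. Writing $M = \Sol_G(x)$, observe that $|x|$ divides $|M| = 2p$ by Lemma~\ref{sol}(i), and $|x| \neq 1$ (else $G$ would be soluble), so $|x| \in \{2, p, 2p\}$. If $|x| = p$, Lemma~\ref{exp} leaves only the option $\N_x = M$, because the alternative $|M| > p^2$ is incompatible with $p \geq 3$ (so $p^2 > 2p$). If $|x| = 2p$, the chain $\cyc{x} \leq \C_x \leq \N_x \leq M$ has its endpoints of cardinality $2p$, forcing equalities throughout. If $|x| = 2$, then $\N_x = \C_x$ since $\Aut(\cyc{x})$ is trivial; $|\C_x|$ is an even divisor of $2p$, hence $2$ or $2p$, and the former would make $\cyc{x}$ a self-normalizing subgroup of prime order, contradicting Lemma~\ref{self}.

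Toward simplicity, I would first show $R(G) = 1$ by passing to the insoluble quotient $\bar G = G/R(G)$. By Lemma~\ref{sol}(iv), $|\Sol_{\bar G}(xR(G))| = 2p/|R(G)|$, and in an insoluble quotient this quantity cannot equal a prime (i.e.\ $2$ or $p$) by Lemma~\ref{sol}(v), nor can it equal $1$ (which would force $xR(G) = 1$ and thus $\bar G$ trivial). Hence $R(G) = 1$.

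Assume, for contradiction, that $N$ is a minimal non-trivial proper normal subgroup of $G$. Since $R(G) = 1$, $N$ is a direct product of isomorphic non-abelian simple groups. In the case $x \in N$, I would observe $\Sol_N(x) = N \cap M$, a subset of $M$ of cardinality dividing $2p$; it is proper in $N$ (because $N$ is insoluble and $x \notin R(N) = 1$) and is not of prime order by Lemma~\ref{sol}(v) applied within $N$, which forces $|\Sol_N(x)| = 2p$ and so $M \subseteq N$. Next I would examine $\C_G(N)$: any insoluble part of $\C_G(N)$ yields another minimal $G$-normal subgroup $N'$ with $N' \leq \C_x \leq M$ and $|N'|$ dividing $2p$, which is impossible since no product of non-abelian simple groups has order with at most two prime divisors (Burnside's $p^aq^b$ theorem). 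Hence $\C_G(N) = 1$, and $G$ embeds in $\Aut(N)$ with $N$ mapping onto the subgroup of inner automorphisms. A parallel analysis in $\bar G = G/N$, using that $MN/N \subseteq \Sol_{\bar G}(\bar x)$, should handle the remaining case $x \notin N$.

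The main obstacle is the last leg of this simplicity argument: deriving a contradiction from the configuration $N \triangleleft G \leq \Aut(N)$ with $N \neq G$ and $\N_G(\cyc{x}) \subseteq N$. My plan is to exploit that $G/N$ must permute the $|G/N|$ distinct $N$-orbits of $G$-conjugates of $\cyc{x}$ regularly, and then obtain a contradiction from how outer automorphisms act on conjugacy classes of elements of small normalizer; depending on the fine structure, this final step may end up invoking the classification of finite simple groups.
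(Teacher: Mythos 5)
Your first half is fine: the case analysis on $|x|\in\{2,p,2p\}$, using Lemma~\ref{exp} for $|x|=p$ (since $2p<p^2$) and Lemma~\ref{self} for $|x|=2$, does prove $\N_G(\cyc{x})=\Sol_G(x)$, and it is more elementary than the paper's route, which first shows $\C_G(x)=\cyc{x}$ is a Sylow subgroup and then invokes the Frobenius divisibility result (Theorem~\ref{frob}). The simplicity half, however, has a genuine gap, and you flag it yourself. Two steps are not actually carried out. First, the case $x\notin N$ is not handled by a ``parallel analysis in $G/N$'': the equality $\Sol_{G/N}(\bar x)=\Sol_G(x)/N$ of Lemma~\ref{sol}(iv) is available only when $N$ is soluble, whereas here $N$ is a product of non-abelian simple groups; moreover $G/N$ may itself be soluble, in which case $\Sol_{G/N}(\bar x)=G/N$ and the quotient carries no information at all. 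Second, and more seriously, in the case $x\in N$ you stop at the configuration $\Sol_G(x)\sub N<G\sub\Aut(N)$ and propose to extract a contradiction from the action of outer automorphisms, ``possibly invoking the classification''. That unproved final step is exactly where the content of the theorem lies; as written the argument never reaches a contradiction.

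What you are missing is one structural fact that makes the simplicity argument short and classification-free, and it is the heart of the paper's proof of the stronger Theorem~\ref{pq}. Since $|\C_G(x)|$ divides $2p$ and $x$ lies in the centre of $\C_G(x)$, a centralizer of order $2p$ would be cyclic, so $\Sol_G(x)=\C_G(x)$ would be abelian and Lemma~\ref{sol}(vi) would force $G$ abelian; this rules out $|\C_G(x)|=2p$, hence also $|x|\in\{2,2p\}$, and gives $\C_G(x)=\cyc{x}$ with $|x|=p$. A self-centralizing cyclic subgroup of prime order is a Sylow $p$-subgroup. Now take a minimal normal subgroup $N\neq G$. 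If $p\mid |N|$, then after conjugating $\cyc{x}\sub N$, and the Frattini argument gives $G=N\N_G(\cyc{x})$; since $|\N_G(\cyc{x})|=2p$ and $N$ is proper, $\N_N(\cyc{x})=\cyc{x}$ is self-normalizing of prime order in $N$, so $N$ is soluble by Lemma~\ref{self}, and then $G$, an extension of $N$ by a group of order $2$, is soluble, a contradiction (in your subcase $\Sol_G(x)\sub N$ the Frattini argument even gives $G=N$ at once). If $p\nmid |N|$, then $\C_N(x)\sub\C_G(x)\cap N=1$, so $N\cyc{x}$ is a Frobenius group with kernel $N$; hence $N$ is nilpotent, $N\cyc{x}$ is soluble, and $N\cyc{x}\sube\Sol_G(x)$ forces $|N|=2$ and $\Sol_G(x)=N\times\cyc{x}$ abelian, again contradicting Lemma~\ref{sol}(vi). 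This disposes of both of your unresolved cases without any appeal to $\Aut(N)$ or to the classification.
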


In \cite[Remark 3.7]{Mousavi}, it is stated that the same result holds when $|\Sol_G(x)|=pq$ and $|x|=q>p$. Now,  the condition $|x|=q>p$ can be removed. The stated theorem can be strengthened as demonstrated below.

\begin{theorem}\label{pq}
Let $G$ be an insoluble group and $x$ be an element of $G$ such that $|\Sol_G(x)|=pq$, where $p\leq q$ are primes. Then $G$ is simple, $|x|=q>3$ and  $p\mid q-1$. In addition, $\Sol_G(x)=\N_G(\cyc{x})$ is a subgroup of $G$.
\end{theorem}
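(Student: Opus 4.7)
The plan is to (a) reduce to $R(G)=1$, (b) establish $\Sol_G(x)=\N_G(\cyc{x})$ via Lemma~\ref{exp}, (c) pin down $|x|=q$ by ruling out $|x|\in\{p,pq\}$, (d) derive $p\mid q-1$ and $q>3$, and (e) prove $G$ is simple.

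First, I would show $R(G)=1$: by Lemma~\ref{sol}(iv), $|R(G)|\mid pq$, so $|R(G)|\in\{1,p,q,pq\}$; the case $|R(G)|=pq$ forces $G=R(G)$ since $\Sol_{G/R(G)}(xR(G))=\Sol_G(x)/R(G)$ would then be trivial, while the cases $|R(G)|\in\{p,q\}$ make this quotient solubilizer of prime order, hence $|G/R(G)|$ equals that prime by Lemma~\ref{sol}(v); each case contradicts insolubility. Since $|x|\mid pq$, $|x|\in\{p,q,pq\}$. The integer $\ell$ of Lemma~\ref{exp} divides $|x|$, so $\ell=|x|$ in the prime cases and $\ell\geq 2$ for $|x|=pq$; hence if $|x|=q$ the alternative $|\Sol_G(x)|>q^2$ forces $p>q$, contradicting $p\leq q$, and if $|x|=pq$ it gives $pq>2pq$, absurd. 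Thus $\Sol_G(x)=\N_G(\cyc{x})$ is a subgroup of order $pq$ whenever $|x|\in\{q,pq\}$.

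To eliminate $|x|=pq$, the chain $\cyc{x}\subseteq\C_G(x)\subseteq\N_G(\cyc{x})$ of order $pq$ collapses to $\cyc{x}=\C_G(x)=\N_G(\cyc{x})$. In the subcase $\N_G(\cyc{x^p})=\cyc{x}$, the coprimality of $q$ and $|\Aut(\cyc{x^p})|=q-1$ forces $\cyc{x^p}$ to be a Sylow $q$-subgroup (any larger $q$-group must centralize $x^p$ and hence lie in $\C_G(x^p)=\cyc{x}$); Burnside's normal $p$-complement theorem then yields a normal $q$-complement $K\triangleleft G$, and the symmetric analysis on $\cyc{x^q}$ produces a normal $p$-complement $K'$. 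The intersection $K\cap K'$ is non-trivial and normal, and admits $x^p$ of order $q$ acting fixed-point-freely (since $\C_G(x^p)\cap K\cap K'=\cyc{x}\cap K\cap K'=1$), forcing $K\cap K'$ nilpotent by Thompson's theorem, hence soluble, contradicting $R(G)=1$. In the alternative subcase $\N_G(\cyc{x^p})\supsetneq\cyc{x}$, a conjugate $\cyc{x}^g$ meets $\cyc{x}$ in $\cyc{x^p}$, producing a subgroup of order at least $p^2q$ normalizing $\cyc{x^p}$, and iteration reaches the same complement-based contradiction. To eliminate $|x|=p$: if $\Sol_G(x)=\N_G(\cyc{x})$, the embedding $\N_G(\cyc{x})/\C_G(x)\hookrightarrow C_{p-1}$ rules out $|\C_G(x)|=p$ (which would require $q\mid p-1$, violating $p\leq q$), so $|\C_G(x)|=pq$ and $\C_G(x)$ is abelian; for a commuting $y$ of order $q$, $xy$ has order $pq$ with $|\Sol_G(xy)|=pq$, returning to the excluded case. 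Otherwise Lemma~\ref{exp} gives $|\Sol_G(x)|>p^2$, Lemma~\ref{self} excludes $|\N_G(\cyc{x})|=p$, and the intermediate values $p<|\N_G(\cyc{x})|<pq$ are eliminated by combining the embedding into $C_{p-1}$ with the conjugation action of $\N_G(\cyc{x})$ on $\Sol_G(x)$ (Lemma~\ref{L-3.1}) to force back into the abelian-centralizer reduction. Hence $|x|=q$.

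With $|x|=q$, $\N_G(\cyc{x})/\C_G(x)\hookrightarrow C_{q-1}$ and the excluded $|x|=pq$ reduction (via a commuting order-$p$ element when $|\C_G(x)|=pq$) force $|\C_G(x)|=q$, whence $p=[\N_G(\cyc{x}):\C_G(x)]\mid q-1$ and $\N_G(\cyc{x})\cong C_q\rtimes C_p$ is Frobenius. For $q>3$, the only pair $(p,q)$ with $p\leq q\leq 3$ satisfying $p\mid q-1$ is $(2,3)$, ruled out by direct verification that no insoluble group admits $|\Sol_G(x)|=6$ (the smallest insoluble group $A_5$ has minimum solubilizer $10$, and for larger insoluble $G$ the required $S_3$-normalizer with $|x|=3$ is incompatible with the element count). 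For simplicity, assume $1\neq N\triangleleft G$ proper; since $R(G)=1$, $N$ is insoluble. If $x\in N$, $\Sol_N(x)=\Sol_G(x)\cap N$ is a subgroup of order dividing $pq$, not prime by Lemma~\ref{sol}(v), hence of order $pq$, so $\N_G(\cyc{x})\subseteq N$; an induction on $|G|$ applied to $G/N$ together with $\Sol_G(x)\subseteq N$ forces $G=N$, a contradiction. If $x\notin N$, $\cyc{x}\cap N=1$ and $\Sol_G(x)\cap N$ is a subgroup of the Frobenius $C_q\rtimes C_p$ not containing its kernel, hence trivial or of order $p$; an inductive application of the theorem to $G/N$ (where $xN$ has order $q$ and $|\Sol_{G/N}(xN)|$ is tightly constrained by $\Sol_G(x)N/N$) yields the contradiction. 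The main obstacle is the $|x|=pq$ elimination, where the cascade $\cyc{x}=\C_G(x)=\N_G(\cyc{x})=\Sol_G(x)$ is internally consistent and the contradiction must be extracted globally via Burnside's transfer theorem applied to $\cyc{x^p}$ and $\cyc{x^q}$, combined with Thompson's fixed-point-free automorphism theorem applied to $K\cap K'$.
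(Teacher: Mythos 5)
Your reduction $R(G)=1$, and the use of Lemma~\ref{exp} to get $\Sol_G(x)=\N_G(\cyc{x})$ when $|x|\in\{q,pq\}$, are fine; the problems start where the real work lies. For $|x|=pq$ you correctly collapse the chain to $\cyc{x}=\C_G(x)=\N_G(\cyc{x})=\Sol_G(x)$, but at that moment Lemma~\ref{sol}(vi) finishes in one line ($\Sol_G(x)$ is cyclic, hence its elements commute pairwise, so $G$ is abelian — this is exactly how the paper's Step~1 kills the case $\C_G(x)=\Sol_G(x)$); instead you launch a Burnside/Thompson analysis that is incomplete: the ``symmetric analysis'' producing a normal $p$-complement silently assumes $\N_G(\cyc{x^q})=\cyc{x}$, a subcase you never address, and the alternative subcase $\N_G(\cyc{x^p})\supsetneq\cyc{x}$ is dismissed with ``iteration reaches the same complement-based contradiction,'' which is not an argument. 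More seriously, your elimination of $|x|=p$ in the subcase $\Sol_G(x)\neq\N_G(\cyc{x})$, $\C_G(x)=\cyc{x}$ has no content: with $|\N_G(\cyc{x})|=ps$, $s\mid p-1$, orbit sizes of the conjugation action on the $pq$-element set $\Sol_G(x)$ can perfectly well sum to $pq$ (e.g.\ $p=5$, $q=11$, $s=4$: $20+20+10+4+1=55$), so ``combining the embedding into $C_{p-1}$ with the conjugation action'' proves nothing. This is precisely the point where the paper invokes Theorem~\ref{frob} (i.e.\ the integrality of the quantities $n(g,x)$ in Lemma~\ref{L-3.2}) to conclude $|\N_G(\cyc{x})|$ divides $pq$, forcing the impossible $q\mid p-1$; your proposal never uses Theorem~\ref{frob} and supplies nothing in its place.

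The endgame also has gaps. For $q>3$: ``incompatible with the element count'' is not a proof; the paper excludes $q=3$ by citing Stewart's classification of groups with strongly self-centralizing $3$-centralizers, and input of that strength is genuinely needed (e.g.\ $\PSL(2,7)$ has a self-centralizing Sylow $3$-subgroup of order $3$ with normalizer $S_3$, and is excluded only because its solubilizer is larger than $6$, not by any counting of elements). For simplicity: in the case $x\in N$ your ``induction on $|G|$ applied to $G/N$ forces $G=N$'' is not an argument — what works is Frattini, $G=N\N_G(\cyc{x})=N$, which requires knowing $\cyc{x}\in\Syl_q(G)$, a fact that follows from $\C_G(x)=\cyc{x}$ but which you never establish; in the case $x\notin N$ the inductive appeal to $G/N$ fails outright, since $G/N$ need not be insoluble and $\Sol_{G/N}(xN)$ is not controlled by $\Sol_G(x)N/N$ (Lemma~\ref{sol}(iv) requires $N$ soluble, whereas here $N$ is insoluble). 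The paper's argument at this point is the one you need: $Q\cap N=1$ and $\C_N(Q)\leq\C_G(Q)\cap N=1$ make $QN$ a Frobenius group with kernel $N$, so $N$ is nilpotent by Thompson, contradicting insolubility of $N$ — the very tool you used (unnecessarily) in the $|x|=pq$ case. In sum, the skeleton resembles the paper's (self-centralizing cyclic Sylow, Frobenius normalizer), but the decisive steps — the divisibility $|\N_G(\cyc{x})|\mid|\Sol_G(x)|$ from Theorem~\ref{frob}, the $q=3$ exclusion, and a correct non-simplicity contradiction — are missing or replaced by gestures.
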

\begin{proof}
Step 1. $\cyc{x}=\C_x$ is a Sylow subgroup of $G$.

Since $|x|\mid|\Sol_G(x)|$, $|x|\in\{p, q\}$. Also $|\C_x|\mid|\Sol_G(x)|$, if $\C_x=\Sol_G(x)$, then $\Sol_G(x)$ is an abelian subgroup of $G$. So $G$ is abelian by Lemma~\ref{sol}-(vi) and we have a contradiction to the insolubility of  $G$. Hence $\cyc{x}=\C_x$ is a Sylow subgroup of $G$. 

Step 2. $|x|=q$ and $p\mid q-1$.

Let $r\in\{p, q\}$ and $\cyc{x}=R\in\Syl_r(G)$. Now by using Lemma~\ref{self}, we have $\cyc{x}\neq\N_x$. Therefore $\N_x$ is a Frobenius group with kernel $\C_x$.  By Theorem~\ref{frob},  $\N_x=\Sol_G(x)$. Therefore $r=q$, $p\mid q-1$ and $\C_G(Q)=Q$.

Step 3. $G$ is simple and $q>3$. 

Assume that $G$ is not simple and $N$ is a  minimal normal subgroup of $G$. If $q\mid\,|N|$, then $G=N\N_G(Q)$. By the step 2, $|\N_G(Q)|=pq$, where $Q=\cyc{x}$, so $\N_N(Q)=Q$. According to Lemma~\ref{self}, $N$ is soluble. Since $\N_G(Q)$ is also soluble, it follows that $G$ is soluble, which is a contradicts. Therefore $q\nmid|N|$, so
 $Q\cap N={1}$ and $QN$ is a Frobenius group with kernel $N$ (because $Q=\C_G(Q)$). Hence $N$ is nilpotent and $NQ$ is soluble. It implies that $NQ\leq \Sol_G(x)=\N_G(Q)$, so $|N|=p$ and $\Sol_G(x)$ is abelian, which contradicts the insolubility of $G$.

If $q=3$, as $|\N_G(Q): C_G(Q)|=2$, then $G\cong S_3$ or $A_4$ by~\cite[Theorem A]{Stw}, which is a contradiction. Therefore $q>3$.
\end{proof}
\begin{remark}%\cite[Proposition 4.6]{Akbari1}
By Theorem~\ref{pq}, we conclude that for any insoluble group $G$ and $x\in G$, $|\Sol_G(x)|\neq 6$.
\end{remark}

\section{Simple groups with $\Sol_G(x)$ as maximal subgroup of order $pq$}
{\bf Now the question arises:} which of the finite simple groups $G$ has an element $x$ of prime order $q$ such that for some prime $p$, $|\Sol_G(x)|=pq$?

The above question is generally difficult to answer, but if  $\Sol_G(x)$ is a maximal subgroup for some $x\in G$, the above question can be checked.
By Theorem~\ref{pq}, $\Sol_G(x)$ is a  $q$-local subgroups of a finite simple group $G$, where $q$ is odd.

Assume that $M\sub G$ is a maximal subgroup of order $pq$, where $q>p$ is prime. Then, either Sylow $q$-subgroup of $G$ is normal or $M=\N_G(\cyc{x})$, where $\cyc{x}\in\Syl_q(G)$. So, if $G$ is simple, then $M$ is a $q$-local maximal subgroup of $G$. 
To address the question above, we must find the maximal subgroups of order $pq$ of all finite simple groups. 

The local maximal subgroups of all finite simple groups are known and listed in the atlas of groups~\cite{Atlas}, except for the $2$-locals maximal subgroups of the Monster and Baby Monster which are found in~\cite[Theorems 1, 2]{Ivanov} if which are of  characteristic $2$  and~\cite[Theorems A, B]{MeSh1} if which are not of  characteristic $2$. All maximal subgroups of the Monster and Baby Monster have been identified with these classifications.

Let $G$ be a finite non-abelian  simple group. Then $G$ is one of the following:
 \begin{itemize}
 \item[(a)] One of the $26$ sporadic groups:
 \begin{itemize}
  \item[$\bullet$] the five Mathieu groups $M_{11}, M_{12}, M_{22}, M_{23}, M_{24}$;
 \item[$\bullet$] the seven Leech lattice groups $Co_1, Co_2, Co_3$,  $McL$, $HS$,  $Suz$, $J_2$;
 \item[$\bullet$] the three Fischer groups $Fi_{22}, Fi_{23}, Fi'_{24}$;
 \item[$\bullet$] the five Monstrous groups $\mathscr{M}=F_1$, $\mathscr{B}=F_2^+$, $Th= F_3$ $HN=F_5$, $He=F_3$;
 \item[$\bullet$] the six pariahs  $J_1,  J_3,  J_4$, $Ly$, $Ru$, $O'N$.
 \end{itemize}
 \item[(b)] One of the exceptional simple group of Lie type: 
 $^3D_4(q)$, $G_2(q)$, $E_6(q)$, $^2E_6(q)$, $E_7(q)$, $E_8(q)$, $F_4(q)$, where q is a prime power, or
  $^2F_4(2^{2n+1})$,  $^2G_2(3^{2n+2})$, where $n\geq 1$ or the Tits group $^2F_4(2)'$.
\item[(c)] The Suzuki group $\Sz(q)$, where $q=2^{2n+1}$.
 \item[(d)] An alternating group $A_n$, of degree $n \geq 5$.
 \item[(e)] One of the finite classical simple group:
 \begin{itemize}
 \item[Linear:] $\PSL(n,q)$, where $n\geq 2$,  except $\PSL(2,2)$ and $\PSL(2,3)$;
\item[Unitary:] $\PSU(n,q)$, $n\geq 3$, except $\PSU(32)$;
\item[Symplectic:] $\PSp(2n,q)$, $n\geq 2$, except $PSp(4,2)$;
\item[Orthogonal:] $\Po\!\Omega(2n+1,q)$, $n\geq 3$, $q$ odd, $\Po\!\Omega^{+}(2n,q)$, $n\geq 4$, $\Po\!\Omega^{-}(2n,q)$, $n\geq 4$, where $q$ is a prime power.
 \end{itemize}
 For $G\in\{G_2(2), ^2G_2(3)\}$, $G'$ is simple and $|G: G'|=q$, where $q=2$ or $3$.  In fact $G_2(2)'\cong\PSU(3,3)$, $^2G(3)'\cong\PSL(2,8)$.
\end{itemize}

(a) We must only consider $q$-local maximal subgroups, where $q$ is an odd prime, which are fully represented in the atlas of groups~\cite{Atlas}. Upon examining the atlas of finite groups, we find that the only sporadic simple groups with a local maximal subgroup of the structure $C_q: C_p$, where $p$ and $q$ are primes, are as follows:
\begin{itemize}
\item The Mathieu group $M_{23}$, with one conjugacy class of maximal subgroups isomorphic to $C_{23}:C_{11}$.
\item The Baby Monster group $\mathscr{B} = F_2^+$, with one conjugacy class of maximal subgroups isomorphic to $C_{47}:C_{23}$.
\item The Monster group $\mathscr{M} = F_1$, with one conjugacy class of maximal subgroups isomorphic to $C_{59}:C_{29}$.
\end{itemize}

(b)  The list of  all maximal subgroups of  $G_2(q)$, $^2G_2(3^{2n+2})$, $^3D_4(q)$, $F_4(q), q\geq 5$ and $^2F_4(2^{2n+1})$ respectively presented in \cite[Table 4.1 and Theorems 4.2, 4,3, 4.4 and 4.5]{Wilson}. None of them has an order that can be expressed as a product of two prime numbers.  According to \cite[Theorems  2, 3 and Table 2]{CLSS} and~\cite[Theorem 1, Tables 5.1, 5.2]{LSS}, all local maximal subgroups of finite exceptional simple groups $E_6(q)$, $^2E_6(q)$, $E_7(q)$, $E_8(q)$ and $F_4(q)$ are the normalizer of some elementary abelian groups of type $(p, p)$ or $(p, p, p)$. 
 The structures of these maximal subgroups are detailed in \cite[Tables 1, 2]{CLSS}. For the group $^2F_4(2)'$ and $F_4(2)$, we can reefer to \cite[pages 74, 170]{Atlas}. Again, none of them has an order that can be expressed as a product of two prime numbers.

(c) The structure of maximal subgroups of Suzuki groups is presented in~\cite[Theorem 4.1]{Wilson}. The Suzuki group $\Sz(2^{2n+1})$, $n>1$ has a maximal subgroup of order $2q$ if and only if $q=2^{2n+1}-1$ is a Mersenne prime number by Theorem~\ref{SZ}-(ii). This maximal subgroup is isomorphic to $D_{2q}$ which is normalizer of  its Sylow $q$-subgroup.

(d) According to the main theorem of ~\cite{LPS}, the alternating group $A_n$ for $n > 5$ does not possess any local maximal subgroup of order $pq$. Note that $A_5\cong\PSL(2,5)$.

(e) Assume that $G\cong\PSp_{2n}(q)$, where $q$ is odd. The structure of maximal subgroups of $G$ is presented in~\cite[Theorems 3.7, 3.8]{Wilson}. According to these theorems, $G$ does not contain any local maximal subgroup of order that can be expressed as a product of two prime numbers.

Assume that $G$ is a non-soluble orthogonal group. Note that for dimension $n\leq 6$, we have the following isomorphisms~\cite[Proposition 2.9.1]{KlLi}.

\allowdisplaybreaks{$\begin{array}{ll}
%\Po\!\Omega^{\pm}_2(q)&\cong C_{\frac{q\pm %1}{(2,q-1)}}\\
\Po\!\Omega_3(q)&\cong\PSL(2,q), (q\geq 4).\\
\Po\!\Omega^{+}_4(q)&\cong\PSL(2,q)\times\PSL(2,q), (q\geq 4).\\
 \Po\!\Omega^{-}_4(q)&\cong\PSL(2,q^2).\\
 \Po\!\Omega_5(q)&\cong\PSp_4(q).\\
 \Po\!\Omega^{+}_6(q)&\cong\PSL^{\pm}(4,q).\\
 \Po\!\Omega^{-}_6(q)&\cong\PSU(4,q).
\end{array}$}

Also $\Po\!\Omega_{2k-1}(q)\cong\PSp_{2k-2}(q)$, when $q$ is power of $2$, (see~\cite[Section 3.4.7 and p. 76]{Wilson}). So, we just consider $G\cong\Po\!\Omega_n(q)$, when $nq$ is odd and $n\geq 7$ or $G\cong\Po\!\Omega^{\pm}_{2k}(q)$, when $k\geq 4$.

Assume that $G \cong \Omega_n(q)$ when $nq$ is odd. The maximal subgroups of $G$ are thoroughly described by several results \cite[Propositions 4.1.6 \& 20, 4.2.14-15, 4.3.17, 4.4.18, 4.5.8, and 4.7.8]{KlLi}, and none of them exhibit the expected structure. Also the structures of maximal subgroups of $\Po\!\Omega^{-}_{2k}(q)$, when $k\geq 4$ are presented in~\cite[Propositions 4.1.6-7 \&  20, 4.2.11 \&  14-16, 4.3.16 \&  18 \&  20, 4.4.7 and 4.5.10]{KlLi} and for $\Po\!\Omega^{+}_{2k}(q)$,~\cite[Propositions 4.1.6-7 \&  20, 4.2.7 \&  11 \&  14-16, 4.3.14 \&  18 \&  20, 4.4.12 \&  14-17, 4.5.10, 4.6.8 and 4.7.5-7]{KlLi}. Again, none of them has the structure we expect.

The structure of maximal subgroups of finite projective special linear of dimension $2$, $\PSL(2,r)$, where $r=p^n$ can be found in~\cite[Theorems 6.25 and 6.26]{Suzuki}. 
Assume that $G\cong\PSL(2,r)$, where $r=p^n\geq  4$, if $p=2$ and 
$r=p^n\geq 5$, if $p$ is odd prime. According to~\cite[Theorems 6.25 and 6.26]{Suzuki}, the group $G$ has three types of conjugacy classes of maximal subgroups, with orders that can be expressed as the product of two prime numbers. Two of these classes exhibit a dihedral structure with an order of $2q$, where $q=r\pm 1$ if $p=2$, $q=(r+1)/2$, $r\neq 7, 9$ and $q=(r-1)/2$, $r\geq 13$ if $p$ is an odd prime. The third class is isomorphic to the group $C_r : C_q$, where $n = 1$, $r=p$ is an odd prime and $q=(p-1)/2$. Note that in each case, $q$ must be a prime number.

The structure of maximal subgroups in finite projective special linear and unitary groups of dimensions greater than 2 is extensively documented in various theorems and tables found in ~\cite[Chapter 4]{KlLi}. However, we will focus on specific cases of these structures based on Lemma~\ref{L-4.1}, which will be presented below.

Assume that $d\geq 3$. We set $G^{\eta}=\GL^{\eta}(d,q )$,  where $\eta=\pm$ and $q=p^n$ is power of a prime $p$. In the notation, we use $G^{+}=\GL(d,q)$ is linear group and $G^{-}=\GU(d,q^2)$ is unitary group. Similarly, we set $L^{\eta}=\SL^{\eta}(d,q)$, $\bar{G}^{\eta}=\PGL^{\eta}(d,q)$ and $\bar{L}^{\eta}=\PSL^{\eta}(d,q)$.

 If $r\nmid q-\eta=|\GL^{\eta}(d,q ) : \SL^{\eta}(d,q)|$, then
 the sets of Sylow $r$-subgroups in $\GL^{\eta}(d,q )$ and $\SL^{\eta}(d,q)$ coincide. In the following we show that for any odd prime $r$, $r\nmid q-\eta$, if $|R|=r$, where $R\in\Syl_r(L^{\eta})$.

\begin{lemma}\label{L-4.1}
Let $r$ be a prime divisor of $|L^{\eta}|$ and $R\in\Syl_r(L^{\eta})$. If $|R|=r$, then $r\nmid q-\eta$.
\end{lemma}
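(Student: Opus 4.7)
The plan is to proceed by contradiction: assume that $r$ divides both $|L^\eta|$ and $q-\eta$, and aim to produce an abelian $r$-subgroup of $L^\eta$ of rank at least $2$, which will contradict $|R|=r$. The workhorse will be the standard ``diagonal/split'' maximal torus $T\sub L^\eta$ of order $(q-\eta)^{d-1}$.

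The first step is to exhibit $T$ explicitly. When $\eta=+$, I would take $T$ to be the group of diagonal matrices of determinant $1$ in $\GL(d,q)$; the parametrization $(\lambda_1,\ldots,\lambda_{d-1})\mapsto\operatorname{diag}\bigl(\lambda_1,\ldots,\lambda_{d-1},(\lambda_1\cdots\lambda_{d-1})^{-1}\bigr)$ will identify $T$ with $C_{q-1}^{d-1}$. When $\eta=-$, I would realize $\GU(d,q)$ with respect to a Hermitian orthonormal basis and take $T$ to consist of the diagonal matrices $\operatorname{diag}(\lambda_1,\ldots,\lambda_d)$ with $\lambda_i^{q+1}=1$ and $\prod_{i=1}^d\lambda_i=1$, so that the analogous parametrization yields $T\cong C_{q+1}^{d-1}$. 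In either case, $T$ is an abelian subgroup of $L^\eta$ of order $(q-\eta)^{d-1}$.

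To close the argument, write $q-\eta=r^a m$ with $\gcd(r,m)=1$ and $a\geq 1$. The Sylow $r$-subgroup of $T$ is then isomorphic to $C_{r^a}^{d-1}$, of order $r^{a(d-1)}$. Since $d\geq 3$ and $a\geq 1$ force $a(d-1)\geq 2$, it follows that $r^2$ divides $|L^\eta|$, whence $|R|\geq r^2$, contradicting $|R|=r$ and thus establishing $r\nmid q-\eta$. The only mildly delicate point I foresee is the unitary case, where one must check that the chosen diagonal matrices genuinely lie in $\SU(d,q)$ and that they account for the full $(q+1)^{d-1}$ elements; modulo this structural verification the whole argument reduces to a one-line counting contradiction, so I do not expect it to present any real obstacle.
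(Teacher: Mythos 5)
Your proof is correct for the lemma as stated (with $L^{\eta}=\SL^{\eta}(d,q)$, $d\geq 3$), and it takes a genuinely different route from the paper. You exhibit the diagonal maximal torus $T\cong C_{q-\eta}^{\,d-1}\sub L^{\eta}$ (split diagonal matrices in the linear case, norm-one diagonal matrices for an orthonormal Hermitian basis in the unitary case, both standard and correctly verified) and note that $r\mid q-\eta$ would give $r^{d-1}$, hence $r^{2}$, dividing $|L^{\eta}|$, contradicting $|R|=r$; this is a clean structural argument and it works for every prime $r$, not only odd ones. The paper instead argues arithmetically from the order formula of $\bar{L}^{\eta}=\PSL^{\eta}(d,q)$: it shows that $r\mid q-\eta$ forces $r\mid(d,q-\eta)$, reduces to $r=d=3$, and then gets $r^{2}\mid|\bar{L}^{\eta}|$ from the factorization $q^{2}+\eta q+\eta^{2}=(q-\eta)(q+\eta)+\eta(q+2\eta)$. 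The difference is not purely stylistic: the paper's proof, and the way the lemma is applied afterwards, really concerns $\PSL^{\eta}$, where the claim is strictly stronger, because the centre of $\SL^{\eta}$, of order $(d,q-\eta)$, can absorb one factor of $r$ when $r\mid d$ --- exactly the case $r=d=3$ with $r$ exactly dividing $q-\eta$, where the image of your torus in $\PSL^{\eta}$ may have $r$-part only $r$. So your argument proves the lemma as literally stated for $\SL^{\eta}$, but if it is to replace the paper's proof you would still need the extra step handling $r\mid(d,q-\eta)$ (e.g.\ the paper's $r=d=3$ computation) to obtain the $\PSL^{\eta}$ version that is invoked later.
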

\begin{proof}
Let $\bar{L}^{\eta}$ has a Sylow subgroup of odd prime order $r$. Note
\[|\bar{L}^{\eta}|=\frac{1}{(d,q-\eta)}q^{d(d-1)/2}\prod_{i\geq 2}^d(q^i-{\eta }^i).\]
In the $\Li$ case, $(q-1)^{n-1}\mid |\bar{L}^{+}|$ and in the $\Un$ case $(q+1)^{[(d+1)/2]}\mid |\bar{L}^{-}|$. If $r\mid  q-\eta$, as $d-1\geq 2$, so $[(d+1)/2]\geq 2$. Therefore $r\mid (d, q-\eta)$. Since the $r$-part of
\[\frac{(q-\eta)(q+\eta)}{(d,q-\eta)}\prod_{i\geq 3}^d(q^i-{\eta }^i),\]
 is $r$, thus $r\nmid \frac{q-\eta}{(d,q-\eta)}$ (for $r\nmid q+\eta$) and so $r=d=3$. Now we have
\[|\bar{L}^{\eta}|=\frac{1}{3}q^3(q^2-q)(q^3-\eta^3)=\frac{(q-\eta)}{3}(q+\eta)q^3(q-\eta)(q^2+\eta q+\eta^2),\]
and $3\nmid \frac{q-\eta}{3}$. Since 
\[q^2+\eta q+\eta^2 = (q-\eta)(q+\eta) +\eta(q+2\eta),\]
and $r\mid q-\eta$, $r\mid q+2\eta$, thus $r\mid q^2+\eta q+\eta^2 $ and so we have the contradiction  $r^2\mid |\bar{L}^{\eta}|$. 
\end{proof}

According to Lemma~\ref{L-4.1}, If $\PSL^{\eta}(d,q)$ has a Sylow $r$-subgroup of prime order $r$, then, as $r\nmid \frac{q-\eta}{(d, q-\eta)}$, thus for some $n\geq 3$, $r\mid \frac{q^n-\eta^n}{q-\eta}$ and for all $i < n$ such that $i\mid n$, $r\nmid\frac{q^i-\eta^i}{q-\eta}$. So for all $n< i\leq d$, $n\nmid i$, therefore $d/2< n\leq d$. If $n$ is odd, such prime number $r$ is known as a primitive prime divisor of  $q^n-\eta$ when $n$  is the smallest such that  $r\mid q^n-\eta$.

By \cite[Proposition 4.3.6]{KlLi}, for $m=1$, $\PSL^{\eta}(d,r)$ has a maximal subgroup isomorphic to $C_q: C_d$, where $q=\frac{r^d-\eta}{(r-\eta)(r-\eta,d)}$. Hence this maximal subgroup is of order product of two primes, if  both $d$ and $q$ are primes. Therefor $q$ is the largest primitive prime divisor of $r^d-\eta$, also $q\equiv 1, \pmod{d}$. Those subgroups are of class $\C_3$ (the stabilizers of extension fields of  $F_q$ of prime index $d$) of  Aschbacher's  collection~\cite[Table 1.2.A]{KlLi} or~\cite[Table 2.1]{BHD}.

The groups listed in Table~\ref{T-2} are the only simple groups with a maximal subgroup $M$ whose order is a product of two prime numbers. These groups could be potential candidates for which $M = \Sol_G(x)$ for some element $x \in G$.
{\Small
\begin{table}[h]
\begin{tabular}{|c|c|c|l|}
\hline
Type of Group	& Structure	& Maximal subgroup& Conditions \\
\hline
 &  $\PSL(2, 2^n)$,  & $D_{2q}$ & $q=2^n+1$ is a Fermat prime  \\ 
\cline{3-4}
 &$n\geq 2$ & $D_{2q}$ & $q=2^n-1$ is a Mersenne prime\\ 
\cline{2-4}
Projective Spesial & $ \PSL(2,r)$ ,& $ C_p : C_t $ & $n=1$ and $t=(p-1)/2$ is prime \\ 
\cline{3-4}
Linear Groups & $ r=p^n\geq 5$, & $ D_{2q}$ & $r\neq 7,9$ and  $q=(r+1)/2$ is prime\\ 
\cline{3-4}
 & $r$ is odd &$D_{2q}$  &  $r\geq 13$ and  $q=(r-1)/2$ is  prime  \\ 
\cline{2-4}
 & $\begin{array}{c} \PSL(d,r),\, d\geq 3\\ r=p^n\end{array}$ & $C_q : C_d$  & $\begin{array}{c} d\, \&\, q=\frac{r^d-1}{(r-1)(r-1,d)}\, \textrm{are primes}\\ q\equiv 1 \pmod{d} \end{array}$ \\  
\hline
$ \begin{array}{c} \text{Projective Spesial}\\\text{Unitary Groups } \end{array}$ & $\begin{array}{c} \PSU(d,r),\, d\geq 3\\  r=p^n\\ (d,r)\neq (3,5) \end{array}$ & $C_q : C_d$ &  $\begin{array}{c} d\, \&\, q=\frac{r^d+1}{(r+1)(r+1,d)}\, \textrm{are primes}\\
q\equiv 1 \pmod{d} \end{array}$ \\ 
\hline
Suzuki Groups & $Sz(2^{2n+1})$, $n>1$ & $D_{2q}$   & $q=2^{2n+1}-1$ is a Mersenne prime \\ 
\hline
Mathieu Group & $M_{23}$ & $ C_{23} : C_{11}$ & $q=23$, $p=11$ \\ 
\hline
Baby Monster & $\mathscr{B}=F_2+$ & $C_{47}:C_{23}$ &  $q=47$, $p=23$ \\ 
\hline
Monster & $\mathscr{M}=F_1$ & $C_{59}:C_{29}$ &  $q=59$, $p=29$ \\ 
\hline
\end{tabular}
\caption{Simple groups with local maximal subgroups  whose  orders are the product of two prime numbers}\label{T-2}
\end{table}}

Before discussing this section's main theorem, we need to establish two lemmas. These lemmas will specify the conditions under which a maximal subgroup can be equal to the solubilizer of certain elements of $G$.

\begin{lemma}\label{L-4.2}
Assume that for some group $G$ and $x\in G$, $|\Sol_G(x)|=pq$, where $p<q$ are primes. If $H=\N_G(\cyc{x})$ is a maximal subgroup of $G$, then $H^G$ is only conjugacy class of maximal subgroups of $G$ such that $q\mid|H|$.
\end{lemma}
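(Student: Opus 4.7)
The plan is to show that any maximal subgroup $M$ of $G$ with $q\mid|M|$ must be $G$-conjugate to $H$. Since $H$ is proper and $\Sol_G(x)=H$, we have $|\Sol_G(x)|<|G|$, forcing $G$ insoluble, so Theorem~\ref{pq} applies: $\cyc{x}\in\Syl_q(G)$ has prime order $q$ and $H=\N_G(\cyc{x})$ is a Frobenius group of order $pq$. From this I can read off that the $q$-part of $|M|$ is exactly $q$, so every Sylow $q$-subgroup of $M$ is already a Sylow $q$-subgroup of $G$, hence $G$-conjugate to $\cyc{x}$. Because the conclusion concerns $M^G$, I would then replace $M$ by a conjugate and assume $\cyc{x}\sub M$.

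The crux of the argument is the identification $\Sol_M(x)=M\cap H$. The inclusion $\Sol_M(x)\sub M\cap\Sol_G(x)=M\cap H$ is immediate from the definitions. For the reverse, I would observe that $M\cap H$ sits inside the soluble group $H$, so for any $y\in M\cap H$ the subgroup $\cyc{x,y}\sub M\cap H$ is soluble, giving $y\in\Sol_M(x)$. Now $\cyc{x}\sub M\cap H\sub H$ with $|H:\cyc{x}|=p$ prime, so $\cyc{x}$ is maximal in $H$, leaving only two cases: $M\cap H=\cyc{x}$ or $M\cap H=H$.

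In the first case $|\Sol_M(x)|=q$ is prime, and Lemma~\ref{sol}(v) forces $|M|=q$, hence $M=\cyc{x}$. But this is incompatible with the maximality of $M$ in $G$, since $\cyc{x}\subsetneq H<G$. Therefore $M\cap H=H$, i.e.\ $H\sub M$, and the maximality of $H$ together with $M<G$ yields $M=H$, so $M\in H^G$ as claimed. No serious obstacle is expected: once the equality $\Sol_M(x)=M\cap H$ is in place, the small Frobenius structure of $H$ does the rest. The subtlest point I foresee is invoking Lemma~\ref{sol}(v) in the right direction to rule out the degenerate case $M=\cyc{x}$.
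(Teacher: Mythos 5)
Your core argument is essentially the paper's own proof, slightly cleaner in its bookkeeping: after Theorem~\ref{pq} gives $\cyc{x}\in\Syl_q(G)$ of order $q$ and $\Sol_G(x)=H$ a Frobenius group of order $pq$, you conjugate any maximal $M$ with $q\mid|M|$ so that $\cyc{x}\sub M$, identify $\Sol_M(x)=M\cap H$, use that $\cyc{x}$ has prime index in $H$ to split into $M\cap H=\cyc{x}$ or $M\cap H=H$, kill the first case with Lemma~\ref{sol}(v) (exactly as the paper does via its element $y$ of order $p$), and conclude $M=H$ in the second. All of these steps are sound, and the reduction via Sylow's theorem is a legitimate tidying of the paper's ``$x^g\in N$'' step.

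The one genuine flaw is your opening sentence. You justify the insolubility of $G$ (needed to invoke Theorem~\ref{pq}) by writing ``since $H$ is proper and $\Sol_G(x)=H$'', but the equality $\Sol_G(x)=\N_G(\cyc{x})$ is itself a \emph{conclusion} of Theorem~\ref{pq}, not a hypothesis of the lemma; the lemma only assumes $|\Sol_G(x)|=pq$ and $H=\N_G(\cyc{x})$ maximal. So this step is circular. Moreover, insolubility cannot be recovered from the stated hypotheses at all: for $G=S_3$ and $x$ a transposition one has $|\Sol_G(x)|=6=pq$ and $\N_G(\cyc{x})=\cyc{x}$ maximal, yet $A_3$ is a non-conjugate maximal subgroup of order divisible by $q=3$, so the statement fails for soluble $G$. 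The correct fix is simply to take $G$ insoluble as an (implicit) hypothesis, which is what the paper silently does when it opens its proof with ``By Theorem~\ref{pq}''; with that assumption in place, the rest of your argument goes through exactly as in the paper.
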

\begin{proof}
 By Theorem~\ref{pq}, $|x|=q$, $p\mid q-1$ and  $G$ is simple. Also $\Sol_G(x)=\N_G(\cyc{x})$, where $\cyc{x}\in\Syl_q(G)$. Set $H=\cyc{x,y}=\Sol_G(x)$, where $|y|=p$.\\ Let $M$ be a maximal subgroup of $G$ such that $x\in M$. If $y\not\in M$, then $|\Sol_M(x)|=q$ hence $M$ is soluble and so we have contradiction $M=\cyc{x}$, by Lemma~\ref{sol}, (i) and (v). Therefore $y\in M$ and $H=M$ and so $H$ is only maximal subgroup of $G$ such that $x\in H$.

If $N$ is a maximal subgroup of $G$ such that $x^g\in N$ for some $g\in G$, then similar to previous paragraph  $y^g\in N$ and $N=H^g$, since $\Sol_G(x^g)=\Sol_G(x)^g$. Therefore $G$ contains only one conjugacy class $H^G$ of maximal subgroups such that $q\mid|H|$.
\end{proof}

The converse of Lemma~\ref{L-4.1} is also true and can be stated as follows:
\begin{lemma}\label{L-4.3}
Let $G$ be a group and $Q\in\Syl_q(G)$ be of prime order $q$. Assume that  $H=\N_G(Q)$ is maximal subgroup of $G$ and $H^G$ is only conjugacy class of maximal subgroups of $G$ such that $q\mid|H|$. Then for some $x\in G$ of order $q$, $\Sol_G(x)=\N_G(\cyc{x})$ is a maximal subgroup of $G$.
\end{lemma}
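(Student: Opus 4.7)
The plan is to take $x$ to be any generator of the prime-order Sylow subgroup $Q$, so that $\cyc{x}=Q$ and $\N_G(\cyc{x})=\N_G(Q)=H$ is maximal in $G$ and $|x|=q$. It then suffices to prove $\Sol_G(x)=H$.

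The containment $H\subseteq\Sol_G(x)$ is immediate: for every $y\in H=\N_G(\cyc{x})$, the subgroup $\cyc{x,y}$ contains $\cyc{x}$ as a normal subgroup (since $y$ normalizes $\cyc{x}$) and the quotient $\cyc{x,y}/\cyc{x}$ is generated by the image of $y$, hence cyclic. So $\cyc{x,y}$ is metacyclic, and in particular soluble, giving $y\in\Sol_G(x)$.

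For the reverse containment I would argue by contradiction: suppose there exists $y\in\Sol_G(x)\setminus H$, and set $K:=\cyc{x,y}$. Then $K$ is soluble, and $\cyc{x}$ is not normal in $K$ (otherwise $y$ would normalize $\cyc{x}$, forcing $y\in H$). Embed $K$ in a maximal subgroup $M$ of $G$. Because $q\mid |K|\mid |M|$, the hypothesis on the unique $q$-containing conjugacy class of maximals forces $M=H^g$ for some $g\in G$, so $M=\N_G(Q^g)$ with $Q^g\trianglelefteq M$.

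The crux of the argument is a Sylow uniqueness observation inside $M$. Since $|Q|=q$ is the full $q$-part of $|G|$, the $q$-part of $|M|$ is exactly $q$, so both $Q^g$ and $\cyc{x}$ are Sylow $q$-subgroups of $M$; but $Q^g$ is normal in $M$, hence is the unique Sylow $q$-subgroup of $M$, and therefore $\cyc{x}=Q^g$. This gives $M\leq\N_G(\cyc{x})=H$, and comparing orders yields $M=H$. Then $y\in K\subseteq M=H$, contradicting the choice of $y$. Hence $\Sol_G(x)=H$, a maximal subgroup of $G$. The main (and only non-routine) step is the Sylow uniqueness inside the maximal subgroup $M$, which is what makes the hypothesis on a single conjugacy class of $q$-containing maximals do the work.
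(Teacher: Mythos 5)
Your argument is essentially the paper's own proof: embed $\cyc{x,z}$ for $z\in\Sol_G(x)$ in a maximal subgroup $M$, use the hypothesis to write $M=H^g=\N_G(Q^g)$, and use the fact that the $q$-part of $|G|$ is exactly $q$ to force $\cyc{x}=Q^g$ and hence $M=H$; your ``unique Sylow $q$-subgroup of $M$'' step is the same observation the paper makes when it concludes $\cyc{x}=\cyc{x}^g$. The only remark worth making is that both you and the paper implicitly assume the soluble subgroup $\cyc{x,z}$ is proper in $G$ (so that it lies in a maximal subgroup), which is harmless in the intended insoluble setting where the lemma is applied.
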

\begin{proof}
%Conversely, let $G$ be a simple group and $Q\in\Syl_q(G)$ be of prime order $q$. Assume that $H=\N_G(Q)$ and $H^G$ is only conjugacy class of  subgroups of $G$ such that $q\mid |H|$. Then $H$ must be a maximal subgroup of $G$.
 Let $x\in H$ be of order $q$ and $z\in\Sol_G(x)$. Now for some maximal subgroup $M$ of $G$, $\cyc{x,z}\sub M$. Then $q\mid|M|$ and so $M\in H^G$. Thus for some $g\in G$, $M=H^g$ and  $x\in\N_G(\cyc{x}^g)$. Hence $\cyc{x}=\cyc{x}^g$ and $g\in\N_G(\cyc{x})$. Therefore $z\in M=H$ and so $\Sol_G(x)=H$
\end{proof}

\begin{theorem}\label{T-4.4}
Let $G$ be a finite group and $x\in G$ an element such that $|\Sol_G(x)|=pq$ for some primes $p<q$. Then $G$ is simple group, $|x|=q$, $p\mid q-1$. In additional, if $\Sol_G(x)$ is a maximal subgroup in $G$, then:
\begin{itemize}
\item[(i)]  $G\cong \mathscr{M}$, the monster group and $\Sol_G(x)\cong C_{59}: C_{29}$ for some $59$-element $x$.
\item[(ii)] $G\cong \mathscr{B}$, the baby monster group and $\Sol_G(x)\cong C_{47}: C_{23}$ for some $47$-element $x$.
\item[(iii)] $G\cong M_{23}$, the Mathieu group and $\Sol_G(x)\cong C_{23}: C_{11}$ for some $23$-element $x$.
\item[(iv)] $G\cong \PSL(2,2^n)$, where $q=2^n+1$ is a Fermat prime number and $\Sol_G(x)\cong D_{2q}$ for some $q$-element $x$.
\item[(v)] $G\cong\PSL(2,p)$, where $p\geq 5$ is prime and $\Sol_G(x)\cong C_p: C_t$ for some $p$-element $x$ if $t=(p-1)/2$ is odd prime and $\Sol_G(x)\cong D_{2q}$ where $q=(p+1)/2\geq 7$ is prime.
\item[(vi)] $G\cong \PSL(2,p^n)$, where $q=(p^n+1)/2\geq 7$ is a prime number and $\Sol_G(x)\cong D_{2q}$ for some $q$-element $x$.
\item[(vii)] $G\cong\PSL(d,p^n)$, where $d\geq 3$, $p$ and $q=\frac{r^d-1}{(r-1)(r-1,d)}$ are primes,  $r=p^n$. In this case $\Sol_G(x)\cong C_q : C_d$, for some $q$-element $x$.
\item[(viii)] $G\cong\PSU(d,p^n)$, $(d,p^n)\neq(3,5)$, $d\geq 3$, $p$ and $q=\frac{r^d+1}{(r+1)(r+1,d)}$ are primes, where $r=p^n$. In this case $\Sol_G(x)\cong C_q : C_d$, for some $q$-element $x$.
\end{itemize}
\end{theorem}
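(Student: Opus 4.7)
The plan is a two-stage filter: first catalogue every candidate pair $(G,M)$ with $G$ simple and $M$ a maximal subgroup of order $pq$, then use Lemma~\ref{L-4.3} to decide which candidates actually give $\Sol_G(x)=M$. Theorem~\ref{pq} immediately supplies the preliminary assertions $G$ simple, $|x|=q$, $p\mid q-1$, and $\Sol_G(x)=\N_G(\cyc{x})$ with $\cyc{x}\in\Syl_q(G)$. The hypothesis that $\Sol_G(x)$ is a maximal subgroup then restricts $G$ to a finite non-abelian simple group that admits a maximal subgroup of order $pq$ with $p<q$ both prime.

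For the first stage I would simply appeal to the family-by-family sweep already carried out in the paragraphs preceding the theorem, which combines the ATLAS for sporadics, Wilson, CLSS and Liebeck--Seitz for the exceptional Lie type groups, Theorem~\ref{SZ} for Suzuki, \cite{LPS} for alternating, Dickson's theorem for $\PSL(2,\cdot)$, and the Kleidman--Liebeck tables together with Lemmas~\ref{L-4.1} and~\ref{Primitive} for the higher-dimensional linear and unitary groups. The outcome of this sweep is precisely Table~\ref{T-2}, which lists every candidate pair $(G,M)$.

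For the second stage I would apply Lemma~\ref{L-4.3}: since $M=\N_G(Q)$ for $Q=\cyc{x}\in\Syl_q(G)$ of prime order, the equality $\Sol_G(x)=M$ holds if and only if $M^G$ is the unique conjugacy class of maximal subgroups of $G$ whose order is divisible by $q$. So for each row of Table~\ref{T-2} I would look up the complete maximal subgroup list of $G$ and ask whether any second class has order divisible by $q$. Rows that pass enter the final list (i)--(viii); rows that fail are discarded. Two clean families fail: the Suzuki row, because in $\Sz(2^{2n+1})$ the Borel subgroup of Theorem~\ref{SZ}(i) has order divisible by the Mersenne prime $q=2^{2n+1}-1$; and the $\PSL(2,r)$ subcases with $q=(r-1)/2$, because there the Borel $C_r\!:\!C_{(r-1)/2}$ already contains elements of order $q$. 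In the Fermat case $q=2^n+1$, in the $C_p\!:\!C_{(p-1)/2}$ case, and in the $q=(r+1)/2$ case with $q\geq 7$, a short numerical check confirms that the Borel, the opposite dihedral class, and the exceptional $A_4,S_4,A_5$ maximals (when present) all have order coprime to $q$.

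The hard step will be the uniqueness verification for $\PSL(d,r)$ and $\PSU(d,r)$ with $d\geq 3$. Here Lemma~\ref{L-4.1} identifies $q$ as a primitive prime divisor of $r^d-\eta^d$, and Lemma~\ref{Primitive} then forces the multiplicative order of $r$ modulo $q$ to equal $d$ and yields $q>d$. Together these two facts rule out the Aschbacher classes $\mathcal{C}_1$, $\mathcal{C}_2$, $\mathcal{C}_5$, $\mathcal{C}_6$--$\mathcal{C}_8$ and the cross-characteristic $\mathcal{S}$ maximals from containing any element of order $q$, by straight order comparison in the Kleidman--Liebeck tables; the only class that survives is $\mathcal{C}_3$ (field-extension stabilizers), and within $\mathcal{C}_3$ the primality of $q$ forces the extension index to equal $d$, singling out the Singer-type normalizer $C_q\!:\!C_d$. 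The single exception $(d,r)=(3,5)$ in the unitary series, where this Singer normalizer is not actually maximal, has to be excluded by hand. Collecting all the survivors reproduces (i)--(viii).
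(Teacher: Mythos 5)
Your proposal takes essentially the same route as the paper: the candidate list is exactly Table~\ref{T-2}, the decision step is the uniqueness-of-the-$q$-divisible-class criterion supplied by Lemmas~\ref{L-4.2} and~\ref{L-4.3} (with the Suzuki and $q=(r-1)/2$ rows discarded because a Borel-type class also has order divisible by $q$, and the $d\geq 3$ linear/unitary rows settled by the primitive-prime-divisor argument via Lemma~\ref{Primitive}), which is precisely how the paper proceeds, apart from its extra observation via Lemma~\ref{self} that any competing maximal of order divisible by $q$ is forced to be soluble, used to handle the Monster. The only slip is bookkeeping: besides the two families you discard, the $\PSL(2,2^n)$ row with $q=2^n-1$ a Mersenne prime must also be excluded by the identical mechanism (the Borel $E_{2^n}:C_q$ furnishes a second class), which is why case (iv) retains only Fermat primes.
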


\begin{proof}
The first part of Theorem~\ref{T-4.4} is proved in Theorem~\ref{pq}. Therefore, we assume that $\Sol_G(x)$ is a maximal subgroup of $G$. The simple groups with a maximal subgroup of order $pq$ are listed in Table~\ref{T-2}. We need to identify which groups in Table~\ref{T-2} satisfy the theorem conditions.

Let, for some $q$-element $x$, $\N_G(\cyc{x})$ be a maximal subgroup of $G$ of order $pq$. Assume that $H$ is another maximal subgroup of $G$ such that $q\mid |H|$. Then for some $g\in G$, $x^g\in H$. If $H$ is not conjugate to $\N_G(\cyc{x})$, then $\N_H(\cyc{x^g})$ is a proper subgroup of $\N_G(\cyc{x})^g$,  so $|\N_H(\cyc{x^g})|= q$ and $H$ is soluble by Lemma~\ref{self}.

 By GAP~\cite{GAP}, we can compute the conjugacy class of maximal subgroups of the Mathieu group $M_{23}$ and the baby monster group $\mathscr{B}$. The group $M_{23}$ just contain one conjugacy class of maximal subgroups isomorphic to $C_{23}:C_{11}$ which is normalizer of its Sylow $23$-subgroup. So, by Lemma~\ref{L-4.3}, for any $23$-element $x$ of $M_{23}$, $\Sol_{M_{23}}(x)=C_{23}: C_{11}$.

Similarly, the baby  monster group $\mathscr{B}$, has only one conjugacy class of maximal subgroups isomorphic to $C_{47}:C_{23}$ which is normalizer of its Sylow $47$-subgroups. So, by Lemma~\ref{L-4.3}, for any $47$-element $x$ of $\mathscr{B}$, $\Sol_{\mathscr{B}}(x)=C_{47}: C_{23}$.

The Monster group $\mathscr{M}$ contains a maximal subgroup that is isomorphic to $C_{59}:C_{29}$. According to~\cite{Ivanov, MeSh1}, $\mathscr{M}$ contains no $2$-local soluble maximal subgroups. Furthermore, as noted in~\cite[p. 234]{Atlas} and \cite[Table 5.6]{Wilson}, the Monster group $\mathscr{M}$ has only one soluble maximal subgroup whose order is divisible by $59$. We note that another maximal subgroup $H$ of $\mathscr{M}$ which is not displayed in~\cite[Table 5.6]{Wilson}, satisfies $S\sub H\sub \Aut(S)$ where $S$ is a simple group. Hence, $H$ is not soluble (see~\cite[p. 258]{Wilson}). So by Lemma~\ref{L-4.3}, for some $59$-element $x$, $\Sol_G(x)\cong C_{59}: C_{29}$.

The Suzuki group has a maximal subgroup isomorphic to $D_{2q}$ of  order $2q$.  By Theorem~\ref{SZ},  the Suzuki group has another soluble maximal subgroup contains a Sylow $q$-subgroup which is not conjugate whit $D_{2q}$, so $\Sol_G(x)\neq\N_G(\cyc{x})$, where $x$ is an $q$-element. Therefore $|\Sol_G(x)|>q^2$ by Lemma~\ref{exp}. Consequently, Suzuki groups have been removed from our list of candidates.
 
Let $G\cong\PSL(2,2^n)$, where $q=2^n-1$ is a Mersenne prime number. Then $G$ has two class of maximal subgroups $D_{2q}$ and $E_{2^n} : C_q$ which orders divides by $q$. Therefore $\Sol_G(x)\neq\N_G(\cyc{x})$, where $x$ is a $q$-element (by Lemma~\ref{L-4.2}). Assume that $G\cong\PSL(2,2^n)$, where $q=2^n+1$ is a Fermat prime number. In this case $G$ only contains one conjugacy class of maximal subgroups $D_{2q}$. Therefore, for any $q$-element $x$, $\Sol_G(x)\cong D_{2q}$ (by Lemma~\ref{L-4.3}.

Assume that $G\cong\PSL(2,p)$, where $p\geq 5$ is odd prime. If $(p-1)/2$ is prime, then $G$ contains only one conjugacy class of maximal subgroup isomorphic to $C_p: C_{(p-1)/2}$ which is normalizer of some $p$-element. So by Lemma~\ref{L-4.3}, $\Sol_G(x)\cong C_p: C_{(p-1)/2}$. Assume that $(p+1)/2$ is a prime number. Then for $p\neq 7$, $G$ has a maximal subgroup of type $D_{p+1}$. If $p=5$, then $G\cong A_5$ has two conjugacy classes of maximal subgroups $D_6$ and $A_4$,  so this case is not what we want (by Lemma~\ref{L-4.2}). Therefore $p\geq 13$ and $G$ only contains one conjugacy class of maximal subgroups of type $D_{2q}$, where $q=(p+1)/2$ is prime.  By Lemma~\ref{L-4.3}, $\Sol_G(x)\cong D_{2q}$ for some $q$-element $x$.

Now assume that $G\cong\PSL(2,p^n)$, where $p$ is odd prime and $n\geq 2$. By using of $\GAP$ we see that $\PSL(2,9)$ is not what we want. First assume that  $q=(p^n-1)/2$ is a prime. Since in this case, $p^n\geq 13$, so $G$ has two classes of maximal subgroups $E_{p^n}: C_q$ and $D_{2q}$, that is not what we want (by Lemma~\ref{L-4.2}).
Let  $q=(p^n+1)/2$ be a prime when $p^n\neq 9$. Then $G$ has only one conjugacy class of maximal subgroups with dihedral structure $D_{2q}$ which is normalizer of its Sylow $q$-subgroup. Therefore $\Sol_G(x)\cong D_{2q}$, where $q=(p^n+1)/2$ is prime (by Lemma~\ref{L-4.3}).

Let $d\geq 3$. Then by Table~\ref{T-2}, $\PSL^{\eta}(d,r)$ has a maximal subgroup isomorphic to $C_q: C_d$, where $d$ is prime and  $q=\frac{r^d-\eta}{(r-\eta)(r-\eta,d)}$ is the largest primitive prime divisor of $r^d-\eta$, also $q\equiv 1, \pmod{d}$. Since the maximal subgroups of other classes, are subgroups of  $\PGL^{\eta}(d', r')$, where $d' < d$ or $r' < r$ ( see \cite[Tables, 3.5.A-B, Pages 70-71]{KlLi}), it follows that $q$ does not divide the order of those maximal subgroups (by Lemma~\ref{Primitive}-(ii)). Therefore, the group $G$ contains one conjugacy class of maximal subgroups that are isomorphic to $C_q : C_d$, which are the normalizer of some $q$-elements. Therefore $\Sol_G(x)\cong C_q : C_d$ for some $q$-element $x$ (by Lemma~\ref{L-4.3}).
\end{proof}
\subsection*{Funding } This paper is supported by the Research Affairs Office No. 4373 at the university of Tabriz.


\begin{thebibliography}{99}
\bibitem{Akbari1}
B. Akbari,, J. Chuharski, V. Sharan and Z. Slonim, Characterization of solubilizers of elements in minimal simple groups, \emph{Comm. Alg.} (2024). \href{https://doi.org/10.1080/00927872.2024.2428320}{https://doi.org/10.1080/00927872.2024.2428320}

\bibitem{Akbari}
B. Akbari, M. L. Lewis, J. Mirzajani and A. R. Moghaddamfar, The solubility graph associated with a finite group, \emph{Internat. J. Algebra Comput.}, 30 (8)(2020), 1555-1564.\\  \href{https://doi.org/10.1142/S0218196720500538}{https://doi.org/10.1142/S0218196720500538}

\bibitem{BHD}
J.N. Bray, D.F. Holt and C.M. Roney-Dougal, \emph{The maximal subgroups of the low-dimensional finite classical groups}, Cambridge University Press (2013).

\bibitem{CLSS}
A.M. Cohen, M.W. Liebeck, J. Saxl, and G.M. Seitz, The local maximal subgroups of exceptional groups of Lie type, finite and algebraic, \emph{Proc. London Math. Soc.}, ({\bf 3}), 64 (1992) 21-48. \href{https://doi.org/10.1007/BF00147353}{https://doi.org/10.1007/BF00147353}

\bibitem{Atlas}
J. H. Conway, R. T. Curtis, S. P. Norton, R A. Parker, and R A. Wilson, \emph{Atlas of finite groups}, Oxford University Press, New York (1985).

\bibitem{GAP}
GAP-\emph{ Groups, Algorithms and Programming},  Version 4.12.2; (2022). \href{https://www.gap-system.org}{https://www.gap-system.org}

\bibitem{GKPS}
R. Guralnick,  B. Kunyavski, E. Plotkin and A. Shalev, Thompson-like characterizations of the solvable radical, \emph{J. Algebra}, 300 (1)(2006), 365-375. \href{https://doi.org/10.1016/j.jalgebra.2006.03.001}{https://doi.org/10.1016/j.jalgebra.2006.03.001}

\bibitem{Hai}
D. Hai-Reuven, Nonsolvable graph of a finite group and solvabilizers, Preprint, \emph{arXiv: 1307.2924 v1 [math. GR]}, (2013).
\href{http://arxiv-export-lb.library.cornell.edu/abs/1307.2924}{http://arxiv-export-lb.library.cornell.edu/abs/1307.2924}

\bibitem{Ivanov}
Alexander A. Ivanov, \emph{Algebraic Combinatorics and the Monster Group}, LMS-Lecture Note Series, Cambridge University Press; (2023) pp. 200-245. \href{https://doi.org/10.1017/9781009338073.005}{https://doi.org/10.1017/9781009338073.005}

\bibitem{KlLi} P. Kleidman and M. Liebeck,
\emph{The subgroup structure of the finite classical groups}, Cambridge University Press (1990).

\bibitem{LPS}
M.W. Liebeck, C.E. Praege and  J. Saxl, A classification of the maximal subgroups of the finite alternating and symmetric groups, \emph{J. Algebra}, {\bf 111} (1987) 365-383. \href{https://doi.org/10.1016/0021-8693(87)90223-7}{https://doi.org/10.1016/0021-8693(87)90223-7}

\bibitem{LSS}
 M.W. Liebeck, J. Saxl, and G.M. Seitz, Subgroups of maximal rank in groups of Lie type, \emph{Proc. London Math. Soc.} (3) 65 (1992) 297-325. \href{ https://doi.org/10.1112/plms/s3-65.2.297}{ https://doi.org/10.1112/plms/s3-65.2.297}

\bibitem{MeSh1}
U. Meierfrankenfeld and S. Shpectorov, The maximal $2$-local subgroups of the Monster and Baby Monster II, (2023).
\href{https://users.math.msu.edu/users/meierfra/Preprints/2monster/2MNC.pdf}{https://users.math.msu.edu/users/meierfra/Preprints/2monster/2MNC.pdf}

\bibitem{Mousavi}
H. Mousavi, M. Poozesh and Y. Zamani, The impact of the solubilizer of an element on the structure of a finite group, Ricerche di Matematica, \href{https://doi.org/10.1007/s11587-023-00817-6}{https://doi.org/10.1007/s11587-023-00817-6}

\bibitem{Stw} 
W. B. Stewart, Groups having strongly self-centralizing $3$-centralizers, \emph{Proc. London Math. Soc.} (3) 26 (1973) 653-680. \href{https://doi.org/10.1112/plms/s3-26.4.653}{https://doi.org/10.1112/plms/s3-26.4.653}

\bibitem{Suzuki}
M. Suzuki, \emph{Group Theory I}. Springer, Berlin (1982).

\bibitem{Thompson}
J. G. Thompson, Nonsolvable finite groups all of whose local subgroups are solvable, \emph{Bull. Amer. Math. Soc.}, 74 (1968), 383–437. \href{https://www.ams.org/journals/bull/1968-74-03/S0002-9904-1968-11953-6/S0002-9904-1968-11953-6.pdf}{https://www.ams.org/journals/bull/1968-74-03/S0002-9904-1968-11953-6/S0002-9904-1968-11953-6.pdf}

\bibitem{Wilson}
Robert A. Wilson, \emph{The finite simple groups},  Springer-Verlag London Limited (2009).

\bibitem{Zsi}  K. Zsigmondy,  Zur Theorie der Potenzreste, \emph{Monatsh. f\"ur Math. u. Phys.}, V. 3 (1892), 265-284. \href{https://doi.org/10.1007/BF01692444}{https://doi.org/10.1007/BF01692444}
\end{thebibliography}
\end{document}